\newcommand{\Z}{\mathbb{Z}}
\newcommand{\Q}{\mathbb{Q}}
\newcommand{\C}{\mathbb{C}}
\newcommand{\uq}{U_q^{\text{tw}}(\mathfrak{so}_3)}
\newcommand{\uqq}{U_q^{\text{tw}}(\mathfrak{so}_4)}
\newcommand{\uqqq}{U_q^{\text{tw}}(\mathfrak{so}_n)}
\newcommand{\inv}{^{-1}}
\newcommand{\f}{\Tilde{f}}
\author{Jordan Disch}
\title{Generic Gelfand-Tsetlin Representations of $\uq$ and $\uqq$}
\date{}
\begin{document}

\newtheorem{theorem}{Theorem}[section]
\newtheorem{lemma}[theorem]{Lemma}
\newtheorem{corollary}{Corollary}[theorem]
\newtheorem{proposition}[theorem]{Proposition}
\newtheorem{case}{Case}
\newtheorem{definition}[theorem]{Definition}
\newtheorem{example}[theorem]{Example}

\maketitle

\begin{abstract}
    \noindent We construct generic Gelfand-Tsetlin representations of the $\imath$quantum groups $\uq$ and $\uqq$. These representations are infinite-dimensional analogs to the finite-dimensional irreducible representations provided by Gavrilik and Klimyk in \cite{gav}. They are quantum analogs of generic Gelfand-Tsetlin representations constructed by Mazorchuk in \cite{ma}. We give sufficient conditions for irreducibility and provide an upper bound for the length with the help of Casimir elements found by Molev, Ragoucy, and Sorba.
\end{abstract}

\section{Introduction}
Fix $h\in\C\setminus2\pi i\Q$ and put $q=\exp(h)$, where $i=\sqrt{-1}$. Thus $q$ is not a root of unity. In \cite{gav}, Gavrilik and Klimyk gave finite-dimensional irreducible representations of the algebra $\uqqq$, a coideal subalgebra of the quantum group $U_q(\mathfrak{gl}_n)$, as discussed in \cite{mol}. These algebras are realized as a quantum symmetric pair in \cite{nou}, which are discussed in detail in \cite{let}. More generally, the algebra $\uqqq$ is an example of an $\imath$quantum group, which are discussed in detail in \cite{wang}. The representations from \cite{gav} are parameterized by the Gelfand-Tsetlin patterns given by \cite{gz}. Following the work started in \cite{dfo}, and arguing in the spirit of \cite{maz}, our goal is to construct infinite-dimensional analogues to the Gavrilik-Klimyk representations from \cite{gav}. We call them generic Gelfand-Tsetlin representations, following the naming conventions given in \cite{ga}. We define these generic Gelfand-Tsetlin representations for $n=3$ and $n=4$.  We prove that they exist and show that under certain restrictions, they have finite length. We also provide an upper bound for the length, along with some examples. When $n=4$, this requires finding a nontrivial central element $C_q\in Z(\uq)$ whose eigenspaces are one-dimensional. We obtain this element from \cite{mol}, though they use a presentation of $\uqqq$ different from that of \cite{gav}. However, \cite{mo} shows the connection between the presentations. This allows us then to have $C_q$ in terms of our original presentation.

In Section \ref{prelim}, we give the necessary lemmas for the following sections. In Section \ref{uq}, we define and prove the existence of our generic Gelfand-Tsetlin representations of $\uq$. We show that under the right conditions, they have a length of at most $3$ and provide sufficient conditions for irreducibility, along with some examples. We also define the central element $C_q$ and give its eigenvalue on the basis vectors. In Section \ref{uqq}, we define and prove the existence of our generic Gelfand-Tsetlin representations of $\uqq$. Under the right conditions, they have a length of at most $6$, and we also provide sufficient conditions for irreducibility and examples.

\section{Preliminaries}\label{prelim}
In this section, we define our algebra $\uqqq$ and provide certain finite-dimensional irreducible representations. We will use these to construct infinite-dimensional analogs for $\uq$ and $\uqq$ in Sections \ref{uq} and \ref{uqq}. We introduce $q$-numbers and prove useful facts about them for later. We prove that a certain set is Zariski-dense in complex affine space, a fact that will be critical for constructing generic Gelfand-Tsetlin representations.

\subsection{$\uqqq$ and Finite-Dimensional Irreducible Representations}
Here we define $\uqqq$ for $n\geq 2$ using the same presentation as \cite[Section~2]{gav}, and give the finite-dimensional representations for $\uq$ and $\uqq$ given in \cite[Propositions~1,2]{ga}. ($U_q^{\text{tw}}(\mathfrak{so}_2)$ is just the polynomial algebra in one generator $\C[I_{21}]$.)

For all $b\in\C$, we define $q^b=\exp(hb)$ and
\begin{equation*}
    [b]:=\dfrac{q^{b}-q^{-b}}{q-q^{-1}}.
\end{equation*}
$\uqqq$ is defined as the complex associative algebra generated by elements $I_{i,i-1}$, $i=2,...,n$, which satisfy the following relations:
\begin{subequations}
\begin{equation}\label{rel1}
    [I_{i,i-1},I_{j,j-1}]=0 \ \ \ \ \ \text{if} \ |i-j|>1,
    \end{equation}
    \begin{equation}\label{rel2}
    I_{i+1,i}^2I_{i,i-1}-[2]I_{i+1,i}I_{i,i-1}I_{i+1,i}+I_{i,i-1}I_{i+1,i}^2=-I_{i,i-1},
    \end{equation}
    \begin{equation}\label{rel3}
    I_{i,i-1}^2I_{i+1,i}-[2]I_{i,i-1}I_{i+1,i}I_{i,i-1}+I_{i+1,i}I_{i,i-1}^2=-I_{i+1,i}.
    \end{equation}
\end{subequations}
    As $q\to 1$, this becomes the universal enveloping algebra of the complex semisimple Lie algebra $\mathfrak{so}_n$. This is because \eqref{rel2} and \eqref{rel3} become
\begin{equation*}
        [I_{i+1,i},[I_{i+1,i},I_{i,i-1}]]=-I_{i,i-1},
    \end{equation*}
    \begin{equation*}
        [I_{i,i-1},[I_{i,i-1},I_{i+1,i}]]=-I_{i+1,i},
    \end{equation*}respectively.
    
Now we give the finite-dimensional irreducible representations defined in \cite[Propositions~1,2]{ga}. When $n=3$, the irreducible representation $V_{\ell}$ is characterized by highest weight $\ell\in \frac{1}{2}\Z_{\geq 0}$. Then $V_{\ell}$ has basis $\{\ket{m}\mid \ell\geq m \geq -\ell, \ \ell-m \in \Z\}$, and the action is as follows:
\begin{subequations}
    \begin{equation} \label{i213}
        I_{21}.\ket{m}=i[m]\ket{m},
    \end{equation}
    \begin{equation} \label{i323}
        I_{32}.\ket{m}=A_{\ell,m}\ket{m+1}-A_{\ell,m-1}\ket{m-1},
    \end{equation}
    \end{subequations}
    where $\ket{-\ell-1}=0$, $\ket{\ell+1}=0$, and
    \begin{equation*}
        A_{\ell,m}=\bigg(\dfrac{[m][m+1]}{[2m][2m+2]}\cdot [\ell+m+1][\ell-m]\bigg)^{\frac{1}{2}}.
    \end{equation*}When $n=4$, the irreducible representation $V_{p,r}$ is characterized by highest weight $(p,r)$ where $p,r\in\frac{1}{2}\Z$, $p-r\in\Z$, and $p\geq |r|$. The basis is given by $\mathcal{B}:=\{\ket{\ell,m}\mid p\geq \ell\geq|r|, \ \ell\geq m\geq -\ell, \ p-\ell\in\Z, \ p-m\in\Z\}$, and the action of $\uqq$ on $V_{p,r}$ is as follows:
    \begin{subequations}
    \begin{equation}\label{i21}
        I_{21}.\ket{\ell,m}=i[m]\ket{\ell,m},
    \end{equation}
    \begin{equation} \label{i32}
        I_{32}.\ket{\ell,m}=A_{\ell,m}\ket{\ell,m+1}-A_{\ell,m-1}\ket{\ell,m-1},
    \end{equation}
    \begin{equation} \label{i43}
        I_{43}.\ket{\ell,m}=B_{\ell,m}\ket{\ell+1,m}-B_{\ell-1,m}\ket{\ell-1,m}+iC_{\ell,m}\ket{\ell,m}
    \end{equation}
    \end{subequations}
    where
    \begin{subequations}
    \begin{equation} \label{alm}
        A_{\ell,m}=\bigg(\dfrac{[m][m+1]}{[2m][2m+2]}\cdot [\ell+m+1][\ell-m]\bigg)^{\frac{1}{2}},
    \end{equation}
    \begin{equation} \label{blm}
    B_{\ell,m}=\bigg(\dfrac{[p+\ell+2][p-\ell][\ell+r+1][\ell-r+1][\ell+m+1][\ell-m+1]}{[\ell+1]^2[2\ell+1][2\ell+3]}\bigg)^{\frac{1}{2}},
\end{equation}
\begin{equation} \label{clm}
    C_{\ell,m}=\dfrac{[p+1][r][m]}{[\ell+1][\ell]},
\end{equation}
\end{subequations}and we define $\ket{\ell,m}=0$ if $\ell$ and $m$ fail to satisfy the interlacing conditions given in $\mathcal{B}$.

\subsection{$q$-Numbers}
The following lemma contains useful results about $q$-numbers which will be used in Sections \ref{uq} and \ref{uqq}. We use the fact that $[b]=0$ if and only if $q^{2b}=1$.
\begin{lemma}\label{lemq}
The following are true:
\begin{enumerate}[{\rm (i)}]
    \item\label{lemq1}If $b\in \Q\setminus\{0\}$, then $[b]\neq 0$.
    \item\label{lemq1.5}If $a-b\in\frac{1}{2}\Z\setminus\{0\}$, then $q^{2a}\neq q^{2b}$.
    \item\label{lemq2}If $[c]=0$, then $[c+k]\neq 0$ for all $k\in\frac{1}{2}\Z\setminus\{0\}$.
    \item\label{lemq3}$\frac{[m]}{[2m]}$ exists if and only if $q^{2m}\neq -1$.
    \item\label{lemq4}$q^{2m_0+k}\neq -1$ for all $k\in \Z$ if and only if $[m_1]\neq [m_2]$ for all $m_1,m_2\in m_0+\Z$ such that $m_1\neq m_2$.
    \item\label{lemq5}$q^{4\ell_0+2k}\neq 1$ for all $k\in\Z$ if and only if $[\ell_1]^2+q^{\ell_1+1}[\ell_1]\neq[\ell_2]^2+q^{\ell_2+1}[\ell_2]$ for all $\ell_1,\ell_2\in\ell_0+\Z$ such that $\ell_1\neq\ell_2$.
\end{enumerate}
\end{lemma}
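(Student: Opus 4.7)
The plan is to reduce every part to the criterion $[b]=0$ iff $q^{2b}=1$ together with the standing hypothesis $h\notin 2\pi i\Q$. For (i), if $[b]=0$ with $b\in\Q\setminus\{0\}$, then $2bh\in 2\pi i\Z$ forces $h\in\frac{\pi i}{b}\Z\subseteq 2\pi i\Q$, a contradiction. Part (ii) then follows by setting $c=a-b\in\tfrac12\Z\setminus\{0\}\subseteq\Q\setminus\{0\}$, since $q^{2a}=q^{2b}$ is equivalent to $q^{2c}=1$, i.e.\ $[c]=0$, which is ruled out by (i). Part (iii) is likewise immediate: $q^{2c}=1$ gives $q^{2(c+k)}=q^{2k}$, and applying (i) to $k\in\tfrac12\Z\setminus\{0\}$ yields $q^{2k}\ne 1$.

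For (iv) I would factor $[2m]=[m](q^m+q^{-m})$, so that the ratio simplifies to $\frac{1}{q^m+q^{-m}}$, which is well-defined precisely when $q^m+q^{-m}\ne 0$, equivalently $q^{2m}\ne -1$. For (v) and (vi) the strategy is the same: translate the asserted equality of $q$-number expressions into a polynomial identity in $x=q^{m_1}, y=q^{m_2}$ (resp.\ $u_1=q^{\ell_1}, u_2=q^{\ell_2}$), factor, and use (i) to discard a ``diagonal'' factor. In (v), $[m_1]=[m_2]$ rearranges to $(x-y)(xy+1)=0$; since $m_1-m_2\in\Z\setminus\{0\}$, (i) rules out $x=y$, leaving $q^{m_1+m_2}=-1$, which has the form $q^{2m_0+k}=-1$ for some $k\in\Z$. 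The converse constructs explicit witnesses: given $q^{2m_0+k}=-1$, take $m_1=m_0+k,\,m_2=m_0$ if $k\ne 0$, and $m_1=m_0+1,\,m_2=m_0-1$ otherwise. In (vi) I would first simplify
\begin{equation*}
[\ell]^2+q^{\ell+1}[\ell]=\frac{(q^{2\ell}-1)(q^{2\ell+2}-1)}{q^{2\ell}(q-q^{-1})^2}
\end{equation*}
by clearing a common denominator; equating at $\ell_1,\ell_2$ and cross-multiplying yields $(u_1^2-u_2^2)(q^2 u_1^2 u_2^2-1)=0$. The first factor is killed by (i) applied to $\ell_1-\ell_2$, so the condition becomes $q^{2(\ell_1+\ell_2)+2}=1$, i.e.\ $q^{4\ell_0+2k}=1$ for $k=m+1$ where $\ell_1+\ell_2=2\ell_0+m$, and the converse again produces distinct $\ell_1,\ell_2\in\ell_0+\Z$ by a small case split.

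The main obstacle is the algebraic simplification in (vi): one must recognize the clean factorization of $[\ell]^2+q^{\ell+1}[\ell]$ so that comparing its values at $\ell_1$ and $\ell_2$ collapses to a product of the diagonal factor (handled by (i)) and the target factor $q^{4\ell_0+2k}-1$. Once that identity is in hand, all six parts rest on the same two tools: non-vanishing of $q$-numbers at rational nonzero arguments, and a one-line choice of witnesses to invert the direction of the equivalences.
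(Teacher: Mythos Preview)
Your proposal is correct and follows essentially the same route as the paper: reduce everything to the criterion $[b]=0\iff q^{2b}=1$ plus ``$q$ is not a root of unity,'' factor $[2m]=[m](q^m+q^{-m})$ for (iv), and for (v)--(vi) set $x=q^{m_i}$ (resp.\ $q^{2\ell_i}$) and factor the difference. The only notable differences are cosmetic: you invoke $h\notin 2\pi i\Q$ directly in (i) where the paper clears denominators to reach an integer power of $q$; in (vi) you use the factored form $[\ell]^2+q^{\ell+1}[\ell]=\dfrac{(q^{2\ell}-1)(q^{2\ell+2}-1)}{q^{2\ell}(q-q^{-1})^2}$ rather than the paper's expanded numerator $q^{2\ell+2}+q^{-2\ell}-q^2-1$, which makes the cross-multiplied factorization $(u_1^2-u_2^2)(q^2u_1^2u_2^2-1)=0$ a bit more transparent; and you spell out explicit witnesses for the converse directions of (v) and (vi), which the paper leaves implicit in its chain of ``equivalently'' statements. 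None of this changes the substance of the argument.
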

\begin{proof}
Suppose $[b]=0$ where $b=\frac{r}{s}$ and $r,s\in\Z$. Without loss of generality, assume $s>0$. Then
\begin{equation*}
    q^{2b}=1
\end{equation*}which implies
\begin{equation*}
    (q^{2b})^s=q^{2r}=1=q^{-2r}.
\end{equation*}Since $r\neq 0$, either $r\in\Z_{>0}$ or $-r\in\Z_{>0}$. But $q$ is not a root of unity, so this is a contradiction.

Now we prove \eqref{lemq1.5}. Suppose $a\neq b$ and $a-b\in\frac{1}{2}\Z$. We show by contradiction that $q^{2a}\neq q^{2b}$. If $q^{2a}=q^{2b}$, then $q^{2(a-b)}=1$, which contradicts the fact that $q$ is not a root of unity.

The proof for \eqref{lemq2} follows from \eqref{lemq1.5}, setting $a=c+k$ and $b=c$, and the fact that a $q$-number $[c]$ is zero if and only if $q^{2c}=1$.

To prove \eqref{lemq3}, note
\begin{equation*}
    \dfrac{[m]}{[2m]}=\dfrac{q^{m}-q^{-m}}{q^{2m}-q^{-2m}}=\dfrac{1}{q^{m}+q^{-m}}
\end{equation*}so we get existence if and only if $q^{m}\neq -q^{-m}$, which is equivalent to $q^{2m}\neq -1$.

Now we prove \eqref{lemq4}. Suppose $[m_1]=[m_2]$ where $m_1,m_2\in m_0+\Z$ and $m_1\neq m_2$. Then setting $x:=q^{m_1}$ and $y:=q^{m_2}$, it is straightforward to check the following equations are equivalent:
\begin{subequations}
\begin{equation*}
    x-\frac{1}{x}=y-\frac{1}{y}
\end{equation*}and
\begin{equation*}
    (xy+1)(x-y)=0.
\end{equation*}This is equivalent to
\begin{equation*}
    y=-\frac{1}{x}.
\end{equation*}(Suppose $x=y$. Then $x^2=y^2$, which contradicts \eqref{lemq1.5}.) So
\begin{equation*}
    q^{m_2}=-q^{-m_1}
\end{equation*}which is equivalent to
\begin{equation*}
    q^{m_1+m_2}=-1.
\end{equation*}Equivalently,
\begin{equation*}
     q^{2m_0+k}=-1 \ \text{for some $k\in \Z$}.
\end{equation*}
\end{subequations}

Finally, we prove \eqref{lemq5}. It is straightforward to check
\begin{equation*}
    [\ell]^2+q^{\ell+1}[\ell] =\frac{q^{2\ell+2}+q^{-2\ell}-q^2-1}{(q-q\inv)^2}.
\end{equation*}So for $\ell_1,\ell_2\in\ell_0+\Z$ where $\ell_1\neq \ell_2$, we have
\begin{equation*}
    [\ell_1]^2+q^{\ell_1+1}[\ell_1]=[\ell_2]^2+q^{\ell_2+1}[\ell_2]
\end{equation*}is equivalent to
\begin{equation*}
    q^{2\ell_1+2}+q^{-2\ell_1}=q^{2\ell_2+2}+q^{-2\ell_2}.
\end{equation*}Let $x:=q^{2\ell_1}$ and $y:=q^{2\ell_2}$. It is straightforward to check the following equations are equivalent:
\begin{subequations}
\begin{equation*}
    q^2x+x\inv=q^2y+y\inv
\end{equation*}and
\begin{equation*}
    (q^2xy-1)(x-y)=0.
\end{equation*}This is equivalent to
\begin{equation*}
    q^2xy=1.
\end{equation*}(Note $x\neq y$ by \eqref{lemq1.5}.) Then this is equivalent to
\begin{equation*}
     q^{4\ell_0+2k_1+2k_2+2}=1
\end{equation*}
\end{subequations}
for some $k_1,k_2\in\Z$, which implies \eqref{lemq5}.
\end{proof}

\subsection{Zariski-Density}
The lemmas in this subsection show that a certain set is Zariski-dense in complex affine space. It is necessary to define Gelfand-Tsetlin patterns for $\uqqq$ for the second, more general lemma. We use these lemmas to prove the existence of our generic Gelfand-Tsetlin representations in Theorems \ref{exist3} and \ref{exist4}. 

Lemma \ref{zar3} is used for $n=3$, while Lemma \ref{zar4} is used for general $n$. Thus Lemma \ref{zar3} is redundant since it is proven by Lemma \ref{zar4}, but we include it for the convenience of the reader since it is a simpler version.
\begin{lemma}\label{zar3} Let $S:=\{(\ell,m)\in (\frac{1}{2}\Z)^2\mid \ell\geq |m|, \ \ell- m \in \Z\}$. Let $S_q:=\{(q^{\ell},q^{m})\mid (\ell,m)\in S\}$. Suppose for $f(z,w)\in \operatorname{Frac}(\C[z,w])$, defined on $S_q$, we have $f(a,b)=0$ for all $(a,b)\in S_q$. Then $f=0$.
\end{lemma}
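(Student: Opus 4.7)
The plan is to reduce the claim to showing that any polynomial vanishing on $S_q$ is the zero polynomial, i.e., that $S_q$ is Zariski-dense in $\C^2$. First I would write $f=p/r$ with $p,r\in\C[z,w]$ coprime; the hypothesis that $f$ is defined on $S_q$ means that $r$ is nowhere zero on $S_q$, so $f|_{S_q}=0$ forces $p|_{S_q}=0$. It therefore suffices to prove that every polynomial $p\in\C[z,w]$ vanishing on $S_q$ is identically zero.

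The strategy is to exploit the fact that each horizontal slice of $S$ of integer height $\ell\in\Z_{\geq 0}$ already carries $2\ell+1$ lattice points $(\ell,m)$ with $m\in\{-\ell,-\ell+1,\dots,\ell\}$. Writing $p(z,w)=\sum_{i=0}^d p_i(z)\,w^i$ with $d=\deg_w p$, I would fix any integer $\ell$ with $2\ell+1>d$. Then $p(q^\ell,w)\in\C[w]$ is a univariate polynomial of degree at most $d$ that vanishes at the $2\ell+1$ points $q^{-\ell},q^{-\ell+1},\dots,q^\ell$. These points are pairwise distinct because $q$ is not a root of unity (by Lemma~\ref{lemq}\eqref{lemq1.5}), so $p(q^\ell,w)$ is the zero polynomial in $w$, which means $p_i(q^\ell)=0$ for every $i$.

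Now I would let $\ell$ range over all positive integers larger than $d/2$. By the same root-of-unity observation, the set $\{q^\ell:\ell\in\Z_{>d/2}\}$ is an infinite set of distinct complex numbers. Each $p_i(z)\in\C[z]$ is a univariate polynomial with infinitely many zeros and hence is identically zero. Therefore $p=0$, and consequently $f=0$.

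There is no real obstacle here; the argument is essentially a double application of the principle that a univariate polynomial with more zeros than its degree must vanish, enabled by the abundance of horizontal slices in $S$ and the non-torsion nature of $q$. The only point requiring minor care is to confine attention to \emph{integer} $\ell$ (rather than $\ell\in\tfrac12\Z_{\geq 0}$) so that $m$ also ranges over integers and the slice $\{(\ell,m):|m|\leq\ell,\ \ell-m\in\Z\}$ has size exactly $2\ell+1$, which is unbounded as $\ell$ grows.
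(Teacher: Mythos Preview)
Your proof is correct and follows essentially the same approach as the paper: reduce to a polynomial and then argue by iterated univariate vanishing, using that $q$ is not a root of unity to guarantee distinctness of the evaluation points. The only cosmetic difference is that the paper freezes $w=q^m$ first and uses the infinite set $\{q^\ell:\ell\ge|m|,\ \ell-m\in\Z\}$ of zeros in $z$, whereas you freeze $z=q^\ell$ first and use the finite-but-sufficiently-large horizontal slice in $w$; both are standard instances of the same Zariski-density argument.
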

\begin{proof}
Note that there exists a nonzero polynomial $h(z,w)$ such that $\f(z,w):=h(z,w)f(z,w)\in \C[z,w]$. It is enough to show $\f=0$ in $\C[z,w]$. There is a nonnegative integer $N$ where for $0\leq k\leq N$ there exists $c_k(w)\in \C[w]$ such that
\begin{equation*}
    \f(z,w)=\sum_{k=0}^{N}c_k(w) z^k.
\end{equation*}Fix $m\in \frac{1}{2}\Z$. Say $b:=q^{m}$. Then
\begin{equation*}
    g_b(z):=\f(z,b)=\sum_{k=0}^{N}c_k(b)z^k\in \C[z].
\end{equation*}Note $\{q^{\ell}\mid \ell\in \frac{1}{2}\Z_{\geq |m|}, \ \ell- m \in \Z\}$ is a set of zeros for $g_b(z)$, and since $q$ is not a root of unity we have that this set is infinite. Therefore $g_b=0$ in $\C[z]$, hence $c_k(b)=0$ for all $0\leq k\leq N$ and for all $b\in \{q^{m}\mid m \in \frac{1}{2}\Z\}=:T$ since our choice of $m$ was arbitrary. Because $q$ is not a root of unity, $T$ is infinite, which means that $c_k(w)=0$ for all $0\leq k\leq N$. Therefore $\f=0$ in $\C[z,w]$, so we are done. 
\end{proof}

The following definition is needed for proving Lemma \ref{zar4}, and it is also needed for the construction of finite-dimensional representations of $\uqqq$.
\begin{definition}
For $n\geq 2$, consider a $k$-tuple $(m_{n1},m_{n2},...,m_{nk})\in\Z^k\cup(\frac{1}{2}+\Z)^k$ where $k=\operatorname{rank}\mathfrak{so}_n=\lfloor\frac{n}{2}\rfloor$ and
\begin{subequations}
    \begin{align}
        m_{n1}\geq m_{n2} \geq \dots \geq m_{nk}\geq 0, &  \ \text{for $n$ odd} \label{highest_wt_odd}\\
        m_{n1}\geq m_{n2} \geq \dots \geq |m_{nk}|, &  \ \text{for $n$ even}. \label{highest_wt_even}
    \end{align}
    \end{subequations}
    A \emph{Gelfand-Tsetlin pattern} for $U_q^{\text{tw}}(\mathfrak{so}_n)$ has the following form: 
\[
\begin{matrix}
m_{n1} & \dots & \dots & \dots & m_{nk}\\
 & & \vdots & &\\
 & m_{51} & & m_{52} &\\
 & m_{41} & & m_{42} &\\
  & & m_{31} & & \\
 & & m_{21} & &
\end{matrix}
\]where the entries satisfy the following interlacing conditions from \cite{gz}: 
\begin{subequations}
\begin{equation} \label{interlacing_odd}
    m_{2p+1,1}\geq m_{2p,1}\geq m_{2p+1,2}\geq m_{2p,2} \geq \dots \geq m_{2p+1,p}\geq m_{2p,p}\geq -m_{2p+1,p},
\end{equation}
\begin{equation} \label{interlacing_even}
    m_{2p,1}\geq m_{2p-1,1}\geq m_{2p,2}\geq m_{2p-1,2}\geq \dots \geq m_{2p-1,p-1}\geq |m_{2p,p}|.
\end{equation}
\end{subequations}
\end{definition}
The top row is the highest weight of a representation, while the lower entries parameterize the basis vectors in the following finite-dimensional representations from \cite{gav}, on which our generic Geland-Tsetlin representations are based. If $\alpha$ is a Gelfand-Tsetlin pattern which labels a basis element of our representation, 
\begin{equation*}
    I_{2p+1,2p}.\ket{\alpha}=\sum_{j=1}^pA_{2p}^j(\alpha)\ket{\alpha^j_{2p}} - \sum_{j=1}^pA_{2p}^j(\overline{\alpha}^j_{2p})\ket{\overline{\alpha}^j_{2p}}
\end{equation*}
\begin{equation*}
    I_{2p+2,2p+1}.\ket{\alpha}=\sum_{j=1}^pB_{2p}^j(\alpha)\ket{\alpha^j_{2p+1}} - \sum_{j=1}^pB_{2p}^j(\overline{\alpha}^j_{2p+1})\ket{\overline{\alpha}^j_{2p+1}}+i\frac{\prod_{r=1}^{p+1}[l_{2p+2,r}]\prod_{r=1}^p[l_{2p,r}]}{\prod_{r=1}^p[l_{2p+1,r}][l_{2p+1,r}-1]}\ket{\alpha}
\end{equation*}where $\alpha^j_i$ (correspondingly $\overline{\alpha}^j_i$) is obtained from $\alpha$ by replacing $m_{ij}$ by $m_{ij}+1$ (correspondingly, by $m_{ij}-1$); $l_{2p,j}=m_{2p,j}+p-j$, $l_{2p+1,j}=m_{2p+1,j}+p-j+1$ and
\begin{subequations}
\begin{equation}\label{a2pj}
    A_{2p}^j(\alpha)=\bigg(\dfrac{[l_j'][l_j'+1]}{[2l_j'][2l_j'+2]}\dfrac{\prod_{r=1}^p[l_r+l_j'][|l_r-l_j'-1|]\prod_{r=1}^{p-1}[l_r''+l_j'][|l_r''-l_j'-1|]}{\prod_{r\neq j}[l_r'+l_j'][|l_r'-l_j'|][l_r'+l_j'+1][|l_r'-l_j'-1|]}\bigg)^{1/2}
\end{equation}
\begin{equation}\label{b2pj}
    B_{2p}^j(\alpha)=\bigg(\dfrac{\prod_{r=1}^{p+1}[l_r+l_j'][|l_r-l_j'|]\prod_{r=1}^{p}[l_r''+l_j'][|l_r''-l_j'|]}{\prod_{r\neq j}[l_r'+l_j'][|l_r'-l_j'|][l_r'+l_j'-1][|l_r'-l_j'-1|]}\dfrac{1}{[l_j']^2[2l_j'+1][2l_j'-1]}\bigg)^{1/2}.
\end{equation}
\end{subequations}
In formula \eqref{a2pj}, $l_{2p+1,i}$ is denoted by $l_i$, $l_{2p,i}$ by $l_i'$, and $l_{2p-1,i}$ by $l_i''$. Likewise, in formula \eqref{b2pj}, $l_{2p+2,i}$ is denoted by $l_i$, $l_{2p+1,i}$ by $l_i'$, and $l_{2p,i}$ by $l_i''$.

Define $s:=r_1+\dots+r_n$, where $r_i=\operatorname{rk}\mathfrak{so}_i=\lfloor\frac{i}{2}\rfloor$. Thus $s=k^2+k$ if $n$ is odd and $s=k^2$ if $n$ is even. 
\begin{lemma}\label{zar4}
Consider a rational function $f\in \operatorname{Frac}(\C[x_1,...,x_s])$. If $f$ is zero at all $q^{m_{ij}}$ for any allowed Gelfand-Tsetlin pattern for $U_q^{\text{tw}}(\mathfrak{so}_n)$, then $f=0$. 
\end{lemma}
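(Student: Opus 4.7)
The plan mirrors and iterates the argument of Lemma~\ref{zar3}. First write $\tilde f = hf$ for a common denominator $h$, so that it suffices to show the polynomial $\tilde f\in\C[x_{ij}]$ is zero, since $\tilde f$ vanishes on the entire set of tuples $(q^{m_{ij}})$ arising from valid GT patterns.

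Order the positions $(i,j)$ of a GT pattern in column-first, top-down fashion: $(n,1),(n-1,1),\ldots,(2,1),(n,2),\ldots,(4,2),\ldots,(n,k),\ldots,(2k,k)$, where $k=\lfloor n/2\rfloor$. I show by induction on this order that $\tilde f$ (as a polynomial) does not depend on $x_{ij}$. For the inductive step, expand $\tilde f$ as a polynomial in $x_{ij}$ whose coefficients lie in the polynomial ring generated by the $x_{i'j'}$ for $(i',j')$ later in the order. For any fixed later variables coming from the evaluation of some valid pattern, the claim is that $m_{ij}$ may take infinitely many valid values (i.e., values for which the already-eliminated earlier variables can still be chosen to complete a valid GT pattern). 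Since $q$ is not a root of unity this yields infinitely many distinct zeros of the polynomial in $x_{ij}$, so each of its coefficients vanishes at the chosen later values. Iterating this argument on each coefficient polynomial (now a polynomial in strictly fewer variables), the coefficients are themselves zero polynomials, and eventually $\tilde f=0$.

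The combinatorial heart of the proof is the claim above: that in the column-first, top-down order, $m_{ij}$ always has infinitely many valid completions. Inspecting the interlacing conditions \eqref{interlacing_odd} and \eqref{interlacing_even}, every upper bound on $m_{ij}$---namely $m_{i+1,j}$ from the same column above, $m_{i,j-1}$ from the same row to the left, $m_{i\pm 1,j-1}$ diagonally, together with the sign constraints---comes from an entry strictly earlier in the order. All such entries have been eliminated and can therefore be chosen arbitrarily large, their chains of upper bounds terminating at $m_{n,1}$, which has no upper bound at all. The lower bounds on $m_{ij}$ either come from later variables (concrete finite values) or from already-eliminated variables (which we can make large, rendering such bounds trivial). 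Hence $m_{ij}$ may be any sufficiently large half-integer of the correct type, yielding infinitely many valid values.

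The main obstacle I anticipate is the case analysis carrying out this combinatorial claim: the interlacing relations take different forms when $i$ is even versus odd, and one must treat separately the top entry of each column (where no row-above constraint exists), the bottom entry $m_{2j,j}$ (where the sign constraint $m_{2j,j}\geq -m_{2j-1,j-1}$ replaces the usual lower bound), and the interior entries, verifying in each case that no upper bound on $m_{ij}$ secretly involves an entry later in the order.
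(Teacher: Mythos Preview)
Your approach is correct and close in spirit to the paper's, but the execution differs. The paper does not argue directly in terms of the GT constraints; instead it exhibits an explicit ``zigzag'' algorithm assigning any decreasing tuple $(a_1\geq a_2\geq\cdots\geq a_s\geq 0)\in S_n$ to the positions $m_{ij}$ so that the interlacing conditions are automatically satisfied (with a separate modification for odd $n$), and then runs the iterated one-variable argument on the simpler set $S_n$. Your route avoids the explicit filling algorithm and the odd/even case split by working directly with the constraints: in the column-first, top-down order every upper bound on $m_{ij}$ comes from an already-eliminated entry and every lower bound from a later (fixed) or already-eliminated entry, so $m_{ij}$ can be pushed arbitrarily large. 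This is cleaner conceptually, at the cost of the case analysis you flag (top of column, interior, bottom entry $m_{2j,j}$ with its sign constraint). Both arguments ultimately encode the same combinatorial fact---that the GT inequalities admit a linear extension with $m_{n,1}$ at the top---and both feed it into the same Vandermonde-style iteration from Lemma~\ref{zar3}.
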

\begin{proof}
If $n=2k$, we show that for any element of $S_n:=\{\textbf{a}=(a_1,...,a_{s})\in \Z^{s}\cup (\frac{1}{2}+\Z)^{s}\mid a_1\geq a_2\geq \dots \geq a_{s}\geq 0\}$, there exists a valid Gelfand-Tsetlin pattern for $\uqqq$ whose entries consist of the $a_r$ for $1\leq r \leq s$. Then for $n=2k+1$ we prove the same result. Once we have done this, it will be enough to show that if $f$ is zero at all $q^{a_r}$ for all $\textbf{a}\in S_n$, then $f=0$. 

First we consider when $n$ is even, or $n=2k$. Consider $(a_1,...,a_{k^2})\in S_n$. We present the following algorithm for constructing a $\uqqq$ Gelfand-Tsetlin pattern:\\ \\ \\
Assign $m_{n1}=a_1$.\\
$j=1,...,k-1$:\\
\indent $i=1,...,j$:\\
\indent $m_{n-1-2(i-1),j-(i-1)}=a_{j^2+i}$\\
\indent end\\ \\
\indent $i=1,...,j+1$:\\
\indent $m_{n-2j+2(i-1),1+(i-1)}=a_{j^2+j+i}$\\
\indent end\\
end\\ \\ \\
(Note that the conditions for a Gelfand-Tsetlin pattern allow $m_{ij}\geq m_{i+2,j+1}$ or $m_{ij}\leq m_{i+2,j+1}$.) We provide an example for $n=8$:
\[
\begin{matrix}
a_1 & & a_4 & & a_9 & & a_{16}\\
 & a_2 & & a_5 & & a_{10} & \\
 & a_3 & & a_8 & & a_{15} & \\
 & & a_6 & & a_{11} & & \\
 & & a_7 & & a_{14} & & \\
 & & & a_{12} & & & \\
 & & & a_{13} & & & \\
\end{matrix}
\]
Now suppose $n=2k+1$. Consider $(a_1,...,a_{k^2+k})\in S_n$. Note any Gelfand-Tsetlin pattern for $U_q^{\text{tw}}(\mathfrak{so}_{n+1})$ reduces to a pattern for $\uqqq$ if you remove the highest weight, the top row consisting of $k+1$ entries. We use a modified version of the above algorithm to create a Gelfand-Tsetlin pattern for $U_q^{\text{tw}}(\mathfrak{so}_{n+1})$, which will then reduce to our desired pattern for $\uqqq$. Since a pattern for $U_q^{\text{tw}}(\mathfrak{so}_{n+1})$ has $k+1$ more entries than the number of $a_r$ that we have, we need to add $k+1$ extra assignments to the algorithm. We modify the algorithm in the following way: when we place an $a_r$ in the $m_{n+1,1},m_{n+1,2},...,m_{n+1,k}$ entries, we place it again into the next entry that the algorithm dictates, then continue along. We choose $m_{n+1,k+1}=0$. Removing the highest weight then gives us a Gelfand-Tsetlin pattern for $\uqqq$ whose entries consist of the $a_r$. Therefore for all $n\geq 2$, $S_n$ is a subset of admissible entries for a $\uqqq$ Gelfand-Tsetlin pattern (given some permutation of the $m_{ij}$; follow the algorithms).

Now suppose $f(q^{a_1},q^{a_2},...,q^{a_s})=0$ for all $\textbf{a}\in S_n$. Once we show $f=0$, we are done. As in Lemma \ref{zar3}, we may assume $f\in \C[x_1,...,x_s]$ for convenience.

There is a nonnegative integer $N_0$ where for $0\leq k\leq N_0$ there exists $c_k^{(1)}(x_2,x_3,...,x_s)\in \C[x_2,x_3,...,x_s]$ such that 
\begin{equation*}
    f(x_1,...,x_s)=\sum_{k=0}^{N_0}c_k^{(1)}(x_2,...,x_s)x_1^k.
\end{equation*}Fix allowed $a_2,...,a_s$. Then if $\textbf{v}_1:=(q^{a_2},...,q^{a_s})$, we have 
\begin{equation*}
    g_{\textbf{v}_1}(x_1):=f(x_1,q^{a_2},...,q^{a_s})\in \C[x_1].
\end{equation*}Note $\{q^{a_1}\mid a_1 \in \frac{1}{2}\Z_{\geq a_2}, \ a_1-a_2\in\Z\}$ is an infinite set of zeros for the single-variable polynomial $g_{\textbf{v}_1}(x_1)$, so $g_{\textbf{v}_1}=0$ in $\C[x_1]$. Since $\textbf{v}_1$ was arbitrary, this implies $c_k^{(1)}(q^{a_2},...,q^{a_s})=0$ for all $0\leq k\leq N_0$ and for all $\textbf{a}\in S_n$. If we show $c_k^{(1)}=0$ for all $0\leq k\leq N_0$, then we are done. 

There is a nonnegative integer $N_1$ where for $0\leq k'\leq N_1$ there exists $c_{k'}^{(2)}(x_3,...,x_s)\in \C[x_3,...,x_s]$ such that 
\begin{equation*}
    c_{k}^{(1)}(x_2,...,x_s)=\sum_{k'=0}^{N_1}c_{k'}^{(2)}(x_3,...,x_s)x_2^{k'}.
\end{equation*}Fix allowed $a_3,...,a_s$. Then if $\textbf{v}_2:=(q^{a_3},...,q^{a_s})$, we have 
\begin{equation*}
    g_{\textbf{v}_2}(x_2):=c_k^{(1)}(x_2,q^{a_3},...,q^{a_s})\in \C[x_2].
\end{equation*}Note $\{q^{a_2}\mid a_2 \in \frac{1}{2}\Z_{\geq a_3}, \ a_2-a_3\in\Z\}$ is an infinite set of zeros for the single-variable polynomial $g_{\textbf{v}_2}(x_2)$, $g_{\textbf{v}_2}=0$ in $\C[x_2]$. Since $\textbf{v}_2$ was arbitrary, this implies $c_{k'}^{(2)}(q^{a_3},...,q^{a_s})=0$ for all $0\leq k'\leq N_1$ and for all $\textbf{a}\in S_n$. Now if we show $c_{k'}^{(2)}=0$ for all $0\leq k'\leq N_1$, then we are done. 

It is clear that we may continue this process until we obtain the result $c_{\hat{k}}^{(s-1)}(q^{a_s})=0$ for all $0\leq \hat{k}\leq N_{s-2}$ and for all $\textbf{a}\in S_n$. This provides us with an infinite set of zeros for a single-variable polynomial, hence $c_{\hat{k}}^{(s-1)}=0$ for all $0\leq \hat{k}\leq N_{s-2}$ and we are done.
\end{proof}

\section{Generic Gelfand-Tsetlin Representations of $\uq$}\label{uq}
In this section, our goal is to construct and study generic Gelfand-Tsetlin representations of $\uq$. These are infinite-dimensional analogs to the representations whose formulas are given by \eqref{i213} and \eqref{i323}. We use Lemma \ref{zar3} to accomplish this, but this requires that we rationalize our formulas first. We achieve an invertible re-scaling of the basis so that the coefficients from the formulas are rational functions in $q^{\ell}$ and $q^m$. While the finite-dimensional representations were characterized by the number $\ell$, these generic Gelfand-Tsetlin representations are characterized by numbers $\ell$ and $m_0$. We impose conditions on $\ell$ and $m_0$ to avoid singularities in the formulas. We then impose an additional condition on $m_0$ so that we may reasonably study the length of these new representations. We provide an example of an irreducible representation and one of maximum length. Lastly, we obtain a Casimir element of $\uq$ and its eigenvalue on basis vectors. This is useful for studying the length of generic Gelfand-Tsetlin representations of $\uqq$ constructed in Section \ref{uqq}.

\subsection{Re-Scaling Basis}
Recall the finite-dimensional representations $V_{\ell}$ of $\uq$ given in \cite[Proposition~1]{ga}, with action provided in \eqref{i213} and \eqref{i323}. In order to use Lemma \ref{zar3} to prove the existence of our generic Gelfand-Tsetlin representations, we need to rationalize the formulas from $V_{\ell}$. We achieve this via an invertible re-scaling of the basis. A rationalized Gelfand-Tsetlin basis for the quantum group $U_q(\mathfrak{gl}_n)$ was also constructed in \cite[Theorem~2.11]{kim}. 
\begin{lemma}\label{lemscale3}
There exists a re-scaling of the basis vectors of the finite-dimensional $\uq$-representation $V_{\ell}$ such that the coefficients in the relations are rational functions in $q^{\ell}$ and $q^{m}$.
\end{lemma}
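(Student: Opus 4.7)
The plan is to set $\ket{m}' := c_m\ket{m}$ for appropriately chosen nonzero scalars $c_m$, so that the new matrix coefficients are rational in $q^\ell$ and $q^m$. The action of $I_{21}$ is diagonal with scalar $i[m] = i(q^m - q^{-m})/(q - q^{-1})$, which is already rational in $q^m$ and unaffected by any re-scaling, so the work is entirely on $I_{32}$. In the new basis,
\[
I_{32}.\ket{m}' = \frac{c_m}{c_{m+1}}A_{\ell,m}\ket{m+1}' - \frac{c_m}{c_{m-1}}A_{\ell,m-1}\ket{m-1}',
\]
so write $\alpha_m := (c_m/c_{m+1})A_{\ell,m}$ and $\beta_m := -(c_m/c_{m-1})A_{\ell,m-1}$ for the new ``up'' and ``down'' coefficients.

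The key observation is the compatibility identity $\alpha_m\,\beta_{m+1} = -A_{\ell,m}^2$, whose right-hand side is an explicit product of $q$-numbers and hence already rational in $q^\ell$ and $q^m$. Consequently, any factorization of the rational function $-A_{\ell,m}^2$ as a product $\alpha_m\,\beta_{m+1}$ of two rational functions determines, via the recursion $c_{m+1}/c_m = A_{\ell,m}/\alpha_m$ (with any nonzero initial value $c_{-\ell}$), a re-scaling of the basis under which the new $I_{32}$-coefficients are precisely the chosen rational $\alpha_m, \beta_m$. A convenient explicit choice, using the identity $[m]/[2m] = 1/(q^m + q^{-m})$ from Lemma \ref{lemq}(iii), is
\[
\alpha_m := \frac{[\ell-m]}{q^m + q^{-m}}, \qquad \beta_m := -\frac{[\ell+m]}{q^m + q^{-m}},
\]
which are manifestly rational in $q^\ell, q^m$, and whose product $\alpha_m\,\beta_{m+1}$ reproduces $-A_{\ell,m}^2$ by direct computation.

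The main obstacle I anticipate is the boundary behaviour at $m = \pm\ell$: because $A_{\ell,\pm\ell} = 0$, the ratio $A_{\ell,m}/\alpha_m$ defining the recursion could a priori be ill-defined there, jeopardizing invertibility of the re-scaling. This causes no trouble because the recursion only needs to be carried out for $-\ell \leq m \leq \ell - 1$ to produce $c_{-\ell}, \ldots, c_\ell$, and in that range both $A_{\ell,m}$ and $\alpha_m$ are nonzero (every relevant $q$-number is nonzero by Lemma \ref{lemq}(i) since its argument is a nonzero rational). Hence $c_m \in \C^\times$ for all $m$ in the index set, the re-scaling is invertible, and the new coefficients are rational in $q^\ell, q^m$ by construction. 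The defining relations \eqref{rel1}--\eqref{rel3} automatically continue to hold, as the re-scaling is an isomorphism of vector spaces intertwining the two actions.
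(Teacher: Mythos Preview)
Your proof is correct and uses the same core idea as the paper: rescale via $\ket{m}' = c_m\ket{m}$ with the $c_m$ defined by a recursion built from the $A_{\ell,m}$, then verify all the ratios are nonzero in the relevant range. The only real difference is the specific target coefficients. The paper takes the simplest possible factorization, setting $\mu_\ell = 1$ and $\mu_m = \mu_{m+1}A_{\ell,m}$ for $m<\ell$, which yields
\[
I_{32}.\ket{m}' = A_{\ell,m}^2\,\ket{m+1}' - \ket{m-1}',
\]
i.e.\ $\alpha_m = A_{\ell,m}^2$ and $\beta_m = -1$; these are precisely the formulas used downstream in \eqref{i323generic}. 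Your compatibility observation $\alpha_m\beta_{m+1} = -A_{\ell,m}^2$ is a nice way to make the available freedom explicit, but your particular choice $\alpha_m = [\ell-m]/(q^m+q^{-m})$, $\beta_m = -[\ell+m]/(q^m+q^{-m})$ is more complicated than necessary and does not match the normalization the paper relies on later.
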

\begin{proof}
For each $m\in\{-\ell,\ell+1,...,\ell-1,\ell\}$, we wish to re-scale the orthonormal basis vectors in the representation $V_{\ell}$ of $\uq$ so that the relations for the action of $\uq$ involve only rational functions of $q$-numbers, i.e. we eliminate square roots. Say $V_{\ell}=\operatorname{span}_{\C}\{\ket{m}':\ket{m}'=\mu_m\cdot \ket{m}, \mu_m\in \C^{\times}\}$. So the task is to find $\mu_m$ for each $m$ so that we eliminate square roots from the relations.

We define each $\mu_m$ recursively. Set $\mu_{\ell}=1$. Then for all $m<\ell$, define $\mu_m:=\mu_{m+1}\cdot A_{\ell,m}$ where $A_{\ell,m}$ is from \eqref{alm}. Note that
\begin{equation*}
    \lim_{m\to 0}\dfrac{[m]}{[2m]}=\dfrac{1}{2},
\end{equation*}so if $m=0$ (resp. $m=-1$), then we set $\frac{[m]}{[2m]}=\frac{1}{2}$ (resp. $\frac{[m+1]}{[2m+2]}=\frac{1}{2}$). This ensures that $A_{\ell,m}$ is always defined.

Suppose $A_{\ell,m}=0$. This can only happen when $[\ell+m+1]=0$ or $[\ell-m]=0$. By Lemma \ref{lemq}\eqref{lemq1}, this means $m=-\ell-1$ or $m=\ell$. But $m$ is never allowed to be $-\ell-1$, and we only use $A_{\ell,m}$ for scaling when $m<\ell$. Therefore our re-scaling of the basis vectors is well-defined since none of the $\mu_m$ are zero.

Then
\begin{equation*}
    I_{21}.\ket{m}'=\mu_m \cdot I_{21}.\ket{m}=\mu_m\cdot i[m]\cdot \ket{m}=i[m]\cdot \ket{m}'
\end{equation*}and
\begin{align*}
    I_{32}.\ket{m}'&=\mu_m\cdot I_{32}.\ket{m}\\
    &=\mu_m\cdot (A_{\ell,m}\ket{m+1}-A_{\ell,m-1}\ket{m-1})\\
    &= \dfrac{\mu_m}{\mu_{m+1}}\cdot A_{\ell,m}\ket{m+1}'-\dfrac{\mu_m}{\mu_{m-1}}\cdot A_{\ell,m-1}\ket{m-1}'\\
    &= \dfrac{\mu_{m+1}\cdot A_{\ell,m}}{\mu_{m+1}}\cdot A_{\ell,m}\ket{m+1}'-\dfrac{\mu_m}{\mu_{m}\cdot A_{\ell,m-1}}\cdot A_{\ell,m-1}\ket{m-1}'\\
    &= A_{\ell,m}^2\ket{m+1}'-\ket{m-1}'.
\end{align*}
\end{proof}

\subsection{Existence}
With our newly rationalized bases, we are prepared to define and prove the existence of generic Gelfand-Tsetlin representations of $\uq$. These now are characterized by two numbers $\ell$ and $m_0$. The variables appearing in the formulas no longer satisfy the conditions \eqref{highest_wt_odd}, \eqref{interlacing_odd}, and \eqref{interlacing_even}, which is why we need $m_0$. This number provides a sort of ``anchor" for our basis, since every basis vector is characterized by a number $m$, an integer shift of $m_0$. We still need $\ell$ to characterize our representations since it appears in the formulas.

Let $\ell,m_0\in \C$ where $q^{2m}\neq -1$ for all $m\in m_0+\Z$. (See Lemma \ref{lemq}\eqref{lemq3}.) Then we define $V_{m_0}^{\ell}=\bigoplus_{m\in m_0+\Z}\C\cdot\ket{m}$ with the action
\begin{subequations}
\begin{equation} \label{i213generic}
    I_{21}.\ket{m}=i[m]\cdot\ket{m}
\end{equation}
\begin{equation} \label{i323generic}
    I_{32}.\ket{m}=\bigg(\dfrac{[m][m+1]}{[2m][2m+2]}\cdot [\ell+m+1][\ell-m]\bigg)\ket{m+1}-\ket{m-1}
\end{equation}
\end{subequations}
where $\frac{[0]}{[0]}:=\frac{1}{2}$ in the $I_{32}$-action.
\begin{theorem} \label{exist3}
$V_{m_0}^{\ell}$ with action from \eqref{i213generic} and \eqref{i323generic} is indeed a representation of $\uq$.
\end{theorem}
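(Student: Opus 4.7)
The plan is to verify the Serre-type relations \eqref{rel2}-\eqref{rel3} (for $i=2$) on each basis vector $\ket{m}$, $m\in m_0+\Z$; relation \eqref{rel1} is vacuous, as $\uq$ has only the two generators $I_{21},I_{32}$. Since $I_{21}$ acts diagonally and $I_{32}$ shifts $m$ by $\pm 1$, each side of either relation applied to $\ket{m}$ produces a finite linear combination $\sum_j F_j(\ell,m)\,\ket{m+j}$ with $j\in\{-2,-1,0,1,2\}$, so the task reduces to proving $F_j(\ell,m)=0$ for every allowed $\ell$ and $m$. Each coefficient is manifestly a rational function in $z:=q^{\ell}$ and $w:=q^{m}$: all $q$-numbers appearing are rational in $z$ or $w$, and the only a priori singular factor $[m]/[2m]$ equals the globally rational expression $1/(q^m+q^{-m})$, whose evaluation at $w=1$ is exactly the convention $\tfrac{1}{2}$ imposed at $m=0,-1$.

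I then leverage the finite-dimensional representations via Lemma \ref{zar3}. By Lemma \ref{lemscale3}, after rescaling, $V_{\ell'}$ obeys exactly the same action formulas \eqref{i213generic}-\eqref{i323generic}. For any $(\ell',m')\in S$ with $\ell'\geq|m'|+2$ and $\ell'-m'\in\Z$, the computation of $\sum_j F_j(\ell',m')\,\ket{m'+j}$ inside $V_{\ell'}$ involves only the interior basis vectors $\ket{m'+k}$, $|k|\leq 2$, all of which genuinely lie in $V_{\ell'}$. Because $V_{\ell'}$ is a representation of $\uq$ and these vectors are linearly independent, the Serre relations force $F_j(q^{\ell'},q^{m'})=0$ at every such $(\ell',m')$.

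The last step is Zariski density. The set of pairs $(q^{\ell'},q^{m'})$ with $(\ell',m')\in S$ and $\ell'\geq|m'|+2$ is Zariski-dense in $\C^2$: the argument of Lemma \ref{zar3} carries over, since for each fixed $m'\in\tfrac{1}{2}\Z$ the subset $\{\ell'\in\tfrac{1}{2}\Z:\ell'\geq|m'|+2,\ \ell'-m'\in\Z\}$ is still infinite. Hence each $F_j$ vanishes identically in $\operatorname{Frac}(\C[z,w])$, so $F_j(q^\ell,q^m)=0$ for every $\ell\in\C$ and $m\in m_0+\Z$ permitted by the hypothesis $q^{2m}\neq-1$, which by Lemma \ref{lemq}\eqref{lemq3} is exactly what is needed for the formulas to be well-defined in the first place.

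The main obstacle is the correspondence in the middle paragraph: one must check that the generic formulas evaluated at $(\ell',m')$ with $|m'|\leq\ell'-2$ really do reproduce the honest action on $V_{\ell'}$. This relies on the rescaling in Lemma \ref{lemscale3} being engineered so that the $I_{32}$-coefficients become precisely $A_{\ell,m}^2$ and $-1$ as in \eqref{i323generic}, with the boundary vectors $\ket{\pm(\ell'+1)}$ suppressed automatically by $A_{\ell',\ell'}=A_{\ell',-\ell'-1}=0$. After that is confirmed, the intricate $q$-identities that a direct verification would otherwise demand are sidestepped entirely by the density argument.
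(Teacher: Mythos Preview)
Your proposal is correct and follows essentially the same approach as the paper: express the defect of each defining relation on $\ket{m}$ as a finite sum with coefficients rational in $q^{\ell},q^{m}$, evaluate at the rescaled finite-dimensional representations of Lemma~\ref{lemscale3}, and conclude via the Zariski-density argument of Lemma~\ref{zar3}. If anything, you are more careful than the paper on one point: by restricting to interior patterns $\ell'\geq |m'|+2$ you ensure that the generic formulas \eqref{i213generic}--\eqref{i323generic} literally coincide with the action on $V_{\ell'}$ (at the lower boundary $m'=-\ell'$ the term $-\ket{m'-1}$ refers to a vector declared zero in $V_{\ell'}$, so the paper's blanket claim that the coefficients vanish at \emph{all} Gelfand--Tsetlin patterns is slightly loose), and you correctly observe that this smaller set is still Zariski-dense by the same one-variable-at-a-time argument.
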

\begin{proof}
We follow the argument of \cite[Theorem~2]{maz}. Let $u=0$ be a relation in $\uq$ (see \eqref{rel1}--\eqref{rel3}). We want to show this relation holds in $V_{m_0}^{\ell}$, i.e. $u.\ket{m}=0$ for any $m\in m_0+\Z$. Using the formulas we can write 
\begin{equation*}
    u.\ket{m}=\sum_{k=-2}^2f_{\ket{m+k}}(q^{\ell},q^m)\ket{m+k}
\end{equation*}where $f_{\ket{m+k}}$ is a rational function in $q^{\ell},q^{m}$. We want to show $f_{\ket{m+k}}$ is identically zero. 

By Lemma \ref{lemscale3}, for the finite-dimensional representation $V_{\ell'}$ with highest weight $\ell'$ (where $\ell'\in\frac{1}{2}\Z_{\geq 0}$), there exists a basis $\{\ket{m'}:|m'|\leq \ell',\ \ell'- m' \in \Z\}$ such that the action of the generators of $\uq$ on a basis vector is given by \eqref{i213generic} and \eqref{i323generic}, except we replace $\ell$ with $\ell'$ and $m$ with $m'$. By \cite[Proposition~1]{ga} and Lemma \ref{lemscale3}, $f_{\ket{m+k}}$ is zero when evaluated at any $q^{\ell'}$ and $q^{m'}$ that define a Gelfand-Tsetlin pattern for $\uq$. Therefore by Lemma \ref{zar3}, $f_{\ket{m+k}}=0$ and we are done.
\end{proof}

\subsection{Irreducibility and Finite Length}
Adding a natural condition on $m_0$ for convenience, we provide sufficient conditions for irreducibility of generic Gelfand-Tsetlin representations in Theorem \ref{irred3}, and an upper bound for the length in Theorem \ref{finlength3}. In the following section, we use these theorems to provide particular values of $\ell$ and $m_0$ that demonstrate examples of irreducibility and maximum length.
\begin{theorem}\label{irred3}
Let $m_0$ be such that $q^{2m_0+k}\neq -1$ for all $k\in \Z$. Let $\ell$ be such that $[\ell+m+1]\neq 0$ and $[\ell-m]\neq 0$ for all $m\in m_0+\Z$. Then $V_{m_0}^{\ell}$ is irreducible.
\end{theorem}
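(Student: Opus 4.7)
The strategy is to take an arbitrary nonzero $\uq$-subrepresentation $W\subseteq V_{m_0}^\ell$ and show $W=V_{m_0}^\ell$. The argument will proceed in three steps.

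First, I would decompose $W$ into weight spaces for $I_{21}$. By \eqref{i213generic}, each $\ket{m}$ is an $I_{21}$-eigenvector with eigenvalue $i[m]$, and the hypothesis $q^{2m_0+k}\neq -1$ for all $k\in\Z$, together with Lemma \ref{lemq}\eqref{lemq4}, guarantees these eigenvalues are pairwise distinct as $m$ ranges over $m_0+\Z$. Since $W$ is $I_{21}$-invariant and the weight spaces are one-dimensional, I conclude $W=\bigoplus_{m\in S}\C\ket{m}$ for some nonempty $S\subseteq m_0+\Z$.

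Second, I would show that $S$ is closed under $m\mapsto m\pm 1$. Fix $m\in S$ and abbreviate $\lambda_m:=\frac{[m][m+1]}{[2m][2m+2]}[\ell+m+1][\ell-m]$, so that by \eqref{i323generic} we have $I_{32}.\ket{m}=\lambda_m\ket{m+1}-\ket{m-1}\in W$. Applying $I_{21}$ produces a second vector $I_{21}I_{32}.\ket{m}=i\lambda_m[m+1]\ket{m+1}-i[m-1]\ket{m-1}$ in $W$. The determinant of the resulting $2\times 2$ coefficient matrix (in the basis $\ket{m+1},\ket{m-1}$) equals $i\lambda_m\bigl([m+1]-[m-1]\bigr)$. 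It does not vanish: $[m+1]\neq[m-1]$ by Lemma \ref{lemq}\eqref{lemq4}; the factors $[\ell+m+1]$ and $[\ell-m]$ inside $\lambda_m$ are nonzero by the hypothesis on $\ell$; and $\frac{[m][m+1]}{[2m][2m+2]}=\frac{1}{(q^m+q^{-m})(q^{m+1}+q^{-m-1})}$ is nonzero by Lemma \ref{lemq}\eqref{lemq3} applied at $m$ and $m+1$, both covered by the hypothesis on $m_0$. Inverting this $2\times 2$ system places both $\ket{m+1}$ and $\ket{m-1}$ individually in $W$, so $m\pm 1\in S$.

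Third, since $S$ is nonempty and closed under $m\mapsto m\pm 1$ inside the connected set $m_0+\Z$, an obvious induction gives $S=m_0+\Z$, and hence $W=V_{m_0}^\ell$.

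The main obstacle is really just careful bookkeeping: combining the appropriate parts of Lemma \ref{lemq} with the direct hypotheses on $\ell$ to rule out every way $\lambda_m$ or the separating determinant could vanish. Conceptually, the essential idea is that distinct $I_{21}$-eigenvalues allow us to extract the two individual weight-components from $I_{32}.\ket{m}$; the assumption $q^{2m_0+k}\neq -1$ is precisely what makes this separation possible, and the hypothesis on $\ell$ is precisely what prevents $I_{32}$ from annihilating any weight line in the raising direction.
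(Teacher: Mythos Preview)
Your proposal is correct and follows essentially the same approach as the paper: use the distinctness of the $I_{21}$-eigenvalues (guaranteed by Lemma~\ref{lemq}\eqref{lemq4}) to separate weight components inside a subrepresentation, and then use the nonvanishing of the coefficients in \eqref{i323generic} to propagate from any $\ket{m}$ to $\ket{m\pm 1}$. The only presentational difference is that the paper carries out the separation step via an explicit $n\times n$ Vandermonde argument on $v, I_{21}.v,\dots,I_{21}^{n-1}.v$, whereas you first invoke the general weight-space decomposition of $W$ and then only need the $2\times 2$ instance of that argument to extract $\ket{m+1}$ and $\ket{m-1}$ from $I_{32}.\ket{m}$; the underlying linear algebra is identical.
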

\begin{proof}
Suppose $U$ is a nonzero subrepresentation. Note $V_{m_0}^{\ell}=\bigoplus_{m\in m_0+\Z}\C\cdot \ket{m}$, where each $\ket{m}$ is a weight vector with respect to $I_{21}$. Since $0\neq U \subseteq V_{m_0}^{\ell}$, there exist nonzero $a_1,...,a_n$ such that $v:=a_1\ket{m_1}+\dots + a_n\ket{m_n}\in U$, where $m_j\neq m_k$ for all $j\neq k$. For simplicity, say $v_j:=a_j\ket{m_j}$ for all $1\leq j\leq n$. Note $v_j$ is a weight vector with respect to $I_{21}$ for all $j$, with the same weight as $\ket{m_j}$, $i[m_j]=:\mu_j$. Then
\begin{align*}
    U\owns v &= 1\cdot v_1+\dots+1\cdot v_n\\
    U\owns I_{21}.v &=\mu_1\cdot v_1+\dots+\mu_n\cdot v_n\\
    U\owns I_{21}^2.v &=\mu_1^2\cdot v_1+\dots+\mu_n^2\cdot v_n\\
    & \hspace{1.15cm}\vdots\\
    U\owns I_{21}^{n-1}.v &=\mu_1^{n-1}\cdot v_1+\dots+\mu_n^{n-1}\cdot v_n.
\end{align*}In other words,
\begin{equation*}
    \begin{pmatrix}
    1 & \dots & 1\\
    \mu_1 & \dots & \mu_n\\
    \mu_1^2 & \dots & \mu_n^2\\
    \vdots & & \vdots\\
    \mu_1^{n-1} & \dots & \mu_n^{n-1}
    \end{pmatrix} 
    \begin{pmatrix}
    v_1\\
    v_2\\
    \vdots\\
    v_n
    \end{pmatrix}=\begin{pmatrix}
    v\\
    I_{21}.v\\
    \vdots\\
    I_{21}^{n-1}.v
    \end{pmatrix}\in U^n.
\end{equation*}By Lemma \ref{lemq}\eqref{lemq4}, $\mu_j\neq \mu_k$ when $j\neq k$. Then the matrix is an invertible Vandermonde matrix. This means that each $v_j$ is a linear combination of elements of $U$, hence $v_j\in U$ for all $1\leq j\leq n$. Then $U\owns \frac{1}{a_j}v_j=\ket{m_j}$ for all $1\leq j\leq n$.

Note  
\begin{equation*}
    U\owns I_{32}.\ket{m}=\bigg(\dfrac{[m][m+1]}{[2m][2m+2]}\cdot [\ell+m+1][\ell-m]\bigg)\ket{m+1}-\ket{m-1},
\end{equation*}and by our choice of admissible $\ell$ and $m_0$, neither coefficient can be zero. (Note the equality in the proof for Lemma \ref{lemq}\eqref{lemq3} shows that $\frac{[m]}{[2m]}$ is never 0.) Then by the above argument, it follows that $\ket{m}\in U$ for all $m\in m_0+\Z$, hence $U=V_{m_0}^{\ell}$.
\end{proof}

Now in Theorem \ref{finlength3} we find an upper bound for the length of generic Gelfand-Tsetlin representations, and provide an explicit composition series in the proof.
\begin{theorem}\label{finlength3}
Let $m_0$ be such that $q^{2m_0+k}\neq -1$ for all $k\in \Z$. Then $V_{m_0}^{\ell}$ has length of at most $3$.
\end{theorem}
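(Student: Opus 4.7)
The plan is to classify all submodules of $V_{m_0}^{\ell}$ by exploiting the $I_{21}$-weight decomposition together with the tridiagonal structure of the $I_{32}$-action. First I would re-run the Vandermonde argument from the proof of Theorem \ref{irred3}: the hypothesis $q^{2m_0+k}\neq -1$ for all $k\in\Z$, via Lemma \ref{lemq}\eqref{lemq4}, guarantees that the $I_{21}$-eigenvalues $i[m]$ are pairwise distinct across the basis $\{\ket{m}\}_{m\in m_0+\Z}$. Applying powers of $I_{21}$ to any element of a nonzero submodule $U$ and inverting the resulting Vandermonde matrix, one concludes that $U$ has a basis consisting of a subset of the weight vectors, so $U=\operatorname{span}\{\ket{m}:m\in S(U)\}$ for some $S(U)\subseteq m_0+\Z$.

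Next I would translate $I_{32}$-invariance into a combinatorial condition on $S(U)$. Writing
\begin{equation*}
    c(m):=\dfrac{[m][m+1]}{[2m][2m+2]}\cdot [\ell+m+1][\ell-m],
\end{equation*}
we have $I_{32}.\ket{m}=c(m)\ket{m+1}-\ket{m-1}$. Since the coefficient of $\ket{m-1}$ is $-1\neq 0$, the condition $m\in S(U)$ forces $m-1\in S(U)$; and when $c(m)\neq 0$, it also forces $m+1\in S(U)$. Hence $S(U)$ is downward-closed in $m_0+\Z$, and its upper boundary, if finite, must occur at a value $m$ where $c(m)=0$ — call these \emph{barriers}.

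The key quantitative step is bounding the number of barriers in $m_0+\Z$. By Lemma \ref{lemq}\eqref{lemq3} together with the hypothesis on $m_0$, the factors $[m]/[2m]$ and $[m+1]/[2m+2]$ never vanish on $m_0+\Z$, so $c(m)=0$ if and only if $[\ell+m+1]=0$ or $[\ell-m]=0$. Each of these two equations has at most one solution in $m_0+\Z$: two distinct solutions of $[\ell+m+1]=0$ would yield $q^{2k}=1$ for some $k\in\Z\setminus\{0\}$, contradicting that $q$ is not a root of unity, and the same argument handles $[\ell-m]=0$. Hence there are at most two barriers $a\leq b$, and setting $U_c:=\operatorname{span}\{\ket{m}:m\leq c\}$, the complete list of submodules forms the totally ordered chain
\begin{equation*}
    0\subseteq U_a\subseteq U_b\subseteq V_{m_0}^{\ell},
\end{equation*}
which is a composition series of length at most $3$.

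The main obstacle is really just the Vandermonde step from Step 1, but since it is essentially identical to the argument already carried out in Theorem \ref{irred3}, the remainder of the proof reduces to the clean combinatorial case analysis above; no new techniques are required.
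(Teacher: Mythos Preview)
Your proposal is correct and follows essentially the same approach as the paper's proof: both invoke the Vandermonde argument from Theorem~\ref{irred3} (enabled by the hypothesis via Lemma~\ref{lemq}\eqref{lemq4}) to reduce to subsets of the weight basis, then identify the at most two ``barrier'' values of $m$ where the raising coefficient vanishes, and read off the composition series as a chain of downward-closed spans. The only cosmetic difference is that you phrase the middle step as a classification of all submodules via downward-closed subsets of $m_0+\Z$, whereas the paper directly exhibits $\uq\ket{m}=\bigoplus_{k\geq 0}\C\ket{m-k}$ for $m$ in the barrier set $S$ and builds the chain $0\subset\uq\ket{m_2}\subset\uq\ket{m_1}\subset V_{m_0}^{\ell}$; the content is the same.
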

\begin{proof}
Let $S:=\{m \in m_0+\Z\mid [\ell+m+1]=0 \ \text{or} \ [\ell-m]=0\}$. By Lemma \ref{lemq}\eqref{lemq2}, there is at most one $m_1\in m_0+\Z$ such that $[\ell+m_1+1]=0$, and there is at most one $m_2\in m_0+\Z$ such that $[\ell-m_2]=0$. Therefore $|S|\leq 2$. If $S$ is empty, then by Theorem \ref{irred3} we see that $V_{m_0}^{\ell}$ is irreducible.

Now suppose $m\in S$. Then $I_{32}.\ket{m}=-\ket{m-1}$. In the proof of Theorem \ref{irred3}, we see that our condition on $m_0$ grants us the following: if $U$ is a nonzero subrepresentation and $U\owns v=a_1\ket{m_1}+\dots+a_n\ket{m_n}$, then $\ket{m_j}\in U$ for all $1\leq j\leq n$. This implies then that $\big(\bigoplus_{k=0}^{\infty}\C\ket{m-k}\big)\subseteq \uq\ket{m}$. The other inclusion is clear from the actions of $I_{21}$ and $I_{32}$. Therefore $\uq\ket{m}=\bigoplus_{k=0}^{\infty}\C\ket{m-k}$. If $|S|=1$, then by the argument in Theorem \ref{irred3} it follows that $\uq\ket{m}$ is irreducible and so is $V_{m_0}^{\ell}/(\uq\ket{m})$.

Suppose $|S|=2$. Say $m_1,m_2\in S$ where $m_1=m_0+k_1$ and $m_2=m_0+k_2$ and $k_1> k_2$. Then a similar argument shows that we have composition series
\begin{equation*}
    0 \subset \uq\ket{m_2} \subset \uq\ket{m_1} \subset V_{m_0}^{\ell}.
\end{equation*}
\end{proof}

\subsection{Examples}
In Example \ref{exirred3} we provide an irreducible generic Gelfand-Tsetlin representation of $\uq$. Then in Example \ref{exmaxlength3} we provide a generic Gelfand-Tsetlin representation of maximum length.
\begin{example}\label{exirred3}
    Let $\ell=\frac{1}{2}$, $m_0=0$. Then $V_{m_0}^{\ell}$ is irreducible.  
    \end{example}
    Since $q$ is not a root of unity, $q^{2m_0+k}=q^k\neq -1$ for all $k\in \Z$. Then for all $k\in \Z$, $[\ell+(m_0+k)+1]=[\frac{1}{2}+k+1]\neq 0$ and $[\ell-(m_0+k)]=[\frac{1}{2}-k]\neq 0$ by Lemma \ref{lemq}\eqref{lemq1}. Therefore $V_{m_0}^{\ell}$ is irreducible.
    \begin{example}\label{exmaxlength3}
    Let $\ell=1$, $m_0=0$. Then $V_{m_0}^{\ell}$ has length 3. 
    \end{example}
    $[\ell+(m_0-2)+1]=[1-2+1]=0$ and $[\ell-(m_0+1)]=[1-1]=0$. Therefore the length of $V_{m_0}^{\ell}$ is $3$. Its composition series is
    \begin{equation*}
        \{0\}\subset \uq \ket{-2}\subset\uq\ket{1}\subset V_{m_0}^{\ell}
    \end{equation*}where $\uq\ket{-2}=\bigoplus_{k=0}^{\infty}\C\ket{-2-k}$ and $\uq\ket{1}=\bigoplus_{k=0}^{\infty}\C\ket{1-k}$.
    
\subsection{Casimir Element}\label{cas}
In order to work towards irreducibility for $\uqq$, we need a Casimir element in $\uq$. This is given in \cite[Example~4.11]{mol}, but they are using a different presentation of $\uq$ from us. Their presentation is as follows: 
we have generators $s_{21}$, $s_{32}$, and $s_{31}$ satisfying
\begin{subequations}
\begin{align}
    [s_{21},s_{32}]_q&=-(q-q\inv)s_{31}, \label{srel1}\\
    [s_{32},s_{31}]_q&=-(q-q\inv)s_{21}, \label{srel2}\\
    [s_{31},s_{21}]_q&=-(q-q\inv)s_{32}, \label{srel3}
\end{align}
\end{subequations}
where $[a,b]_q=ab-qba$.

This same presentation of $\uq$ is given in \cite[Section~2.16.9]{mo}, and the connection between the presentations given by \eqref{srel1}--\eqref{srel3} and scaled versions of \eqref{rel1}--\eqref{rel3} are also provided. Thus we make the following identifications:
\begin{subequations}
\begin{align}
    s_{21}&=q^{-1/2}(q-q\inv)I_{21},\label{sid1} \\
    s_{32}&=q^{-1/2}(q-q\inv)I_{32},\label{sid2} \\
    s_{31}&=-q\inv (q-q\inv)I_{31} \label{sid3}
\end{align}
\end{subequations}
where $I_{31}:=[I_{21},I_{32}]_q$. It is straightforward to check \eqref{srel1}--\eqref{srel3} are still satisfied under these identifications.

Now we obtain the following proposition.
\begin{proposition}\label{cq}
The following are true.
\begin{enumerate}[{\rm (i)}]
    \item\label{cqcentral} $C_q:=-I_{21}^2-q\inv I_{31}^2-q^2I_{32}^2-(q-q\inv)I_{21}I_{32}I_{31}$ is central in $\uq$.
    \item\label{cqeigval} $C_q$ has an eigenvalue of $[\ell]^2+q^{\ell+1}[\ell]$ on a basis vector $\ket{m}\in V_{m_0}^{\ell}$.
\end{enumerate}
\end{proposition}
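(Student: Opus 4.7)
For part (i), the plan is to transport centrality from \cite[Example~4.11]{mol}. Molev, Ragoucy, and Sorba exhibit a central element of $\uq$ in the $(s_{21}, s_{32}, s_{31})$-presentation. Apply the invertible identifications \eqref{sid1}--\eqref{sid3}, together with $s_{31} = -q^{-1}(q-q^{-1})[I_{21}, I_{32}]_q$, to rewrite their element in terms of $I_{21}$, $I_{32}$, and $I_{31}$; after dividing out a nonzero overall scalar of $(q-q^{-1})^{-2}$, verify that the expression matches $C_q$ as stated. Since centrality is intrinsic to the algebra, part (i) follows.

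For part (ii), compute $C_q.\ket{m}$ directly from the generic action formulas. First, using $I_{31} = [I_{21},I_{32}]_q$ together with the identities $[m+1] - q[m] = q^{-m}$ and $q[m] - [m-1] = q^m$, derive the compact formula
\begin{equation*}
I_{31}.\ket{m} = i a_m q^{-m}\ket{m+1} + i q^m \ket{m-1},
\end{equation*}
where $a_m$ denotes the coefficient of $\ket{m+1}$ in $I_{32}.\ket{m}$. Then expand each of $I_{21}^2$, $I_{32}^2$, $I_{31}^2$, and $I_{21}I_{32}I_{31}$ applied to $\ket{m}$ into linear combinations of $\ket{m-2}, \ket{m}, \ket{m+2}$, and combine according to the definition of $C_q$. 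Verify that the off-diagonal coefficients collapse: the $\ket{m+2}$ coefficient is $a_m a_{m+1}(-q^2 + q^{-2m-2} + (q-q^{-1})q^{-m}[m+2])$, which vanishes immediately upon expanding $[m+2]$, and $\ket{m-2}$ is analogous.

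To identify the surviving $\ket{m}$-coefficient with $[\ell]^2 + q^{\ell+1}[\ell]$, I would invoke Zariski density rather than grind through $q$-number algebra. This coefficient is a rational function $F(q^\ell, q^m)$. On every finite-dimensional irreducible $V_{\ell'}$ with the rescaled basis from Lemma \ref{lemscale3}, part (i) and Schur's lemma imply $C_q$ acts by a scalar; evaluating on the highest weight vector $\ket{\ell'}'$, where $A_{\ell',\ell'}=0$ kills most terms, the scalar simplifies via the telescoping $q^2 + 1 + (q^{2\ell'} - 1) = q^{\ell'+1}(q^{\ell'-1} + q^{1-\ell'})$ to exactly $[\ell']^2 + q^{\ell'+1}[\ell']$. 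Thus $F(q^{\ell'}, q^{m'}) - [\ell']^2 - q^{\ell'+1}[\ell']$ vanishes on the set $S_q$ of Lemma \ref{zar3}, forcing equality identically in $q^\ell, q^m$. The main obstacle is pinning down the precise form of the Molev element and confirming the scalar match in part (i); the computation in part (ii) is then largely mechanical, with the Zariski-density shortcut bypassing the heaviest $q$-number manipulations.
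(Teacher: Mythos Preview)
Your approach is correct and largely parallels the paper's. For part~(i) both arguments transport the Molev--Ragoucy--Sorba central element through the identifications \eqref{sid1}--\eqref{sid3}; be aware that the prefactor you must strip is $q(q-q\inv)^2$, not $(q-q\inv)^{-2}$, and there is also an additive constant $q^3+2q$ to discard, but this is cosmetic. For part~(ii) the strategies diverge slightly: the paper invokes Schur plus Lemma~\ref{zar3} \emph{first} to conclude that $C_q$ acts as a scalar on $V_{m_0}^\ell$, which eliminates the off-diagonal terms for free, and then reads off the $\ket{m}$-coefficient in the original (unrescaled) basis and specializes to $m=0$. You instead compute $I_{31}.\ket{m}$ explicitly, kill the $\ket{m\pm2}$ coefficients by hand, and evaluate the scalar at the highest weight $m=\ell'$ where $a_{\ell'}=0$. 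Your route does a little more mechanical work up front but yields the pleasant closed form for $I_{31}.\ket{m}$ and a shorter endgame (only $a_{\ell'-1}$ survives, and the identity $\frac{[\ell'-1]}{[2\ell'-2]}=\frac{1}{q^{\ell'-1}+q^{1-\ell'}}$ finishes it); the paper's $m=0$ specialization requires handling both $A_{\ell,0}^2$ and $A_{\ell,-1}^2$ but avoids your off-diagonal bookkeeping. Either choice is fine.
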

\begin{proof}
According to \cite[Example~4.11]{mol}, the following is central in $\uq$:
\begin{equation*}
    -q^2s_{21}^2-q^2s_{31}^2-q^4s_{32}^2+q^3s_{21}s_{32}s_{31}+q^3+2q.
\end{equation*}So by applying the identifications \eqref{sid1}--\eqref{sid3} to this, we get 
\begin{equation*}
    -q^2s_{21}^2-q^2s_{31}^2-q^4s_{32}^2+q^3s_{21}s_{32}s_{31}+q^3+2q
\end{equation*}
\begin{equation*}
    =q(q-q\inv)^2(-I_{21}^2-q\inv I_{31}^2-q^2I_{32}^2-(q-q\inv)I_{21}I_{32}I_{31})+q^3+2q.
\end{equation*}For convenience, we remove the $q(q-q\inv)^2$ prefactor and the $q^3+2q$ terms from the central element. This concludes the proof of \eqref{cqcentral}.

By Schur's Lemma, this element acts as a scalar on an irreducible representation $V_{\ell}$ (see \eqref{i213} and \eqref{i323}). By Lemma \ref{zar3}, it also acts by the same scalar on $V_{m_0}^{\ell}$. We find the eigenvalue by looking at the $\ket{m}$ coefficient. The coefficient is then
\begin{equation*}
    [m]^2+(-2q[m+1][m]+q^2[m]^2+[m+1]^2+q^2)A_{\ell,m}^2+(-2q[m][m-1]+q^2[m]^2+[m-1]^2+q^2)A_{\ell,m-1}^2
\end{equation*}where $A_{\ell,m}$ is from \eqref{alm}. We use the formula $[a+b]=q^a[b]+q^{-b}[a]$ to write all $q$-numbers in terms of $[m]$. Then the coefficient simplifies to
\begin{equation*}
    [m]^2+(q^{-2m}+q^2)A_{\ell,m}^2+(q^{2m}+q^2)A_{\ell,m-1}^2.
\end{equation*}Since our element is central, we find the eigenvalue by simply setting $m=0$. We get
\begin{align*}
    (1+q^2)A_{\ell,0}^2+(1+q^2)A_{\ell,-1}^2&=(1+q^2)\bigg(\frac{1}{2}\cdot \frac{1}{q+q\inv}[\ell][\ell+1]+\frac{1}{q\inv+q}\cdot\frac{1}{2}[\ell+1][\ell]\bigg)\\
    &=\frac{1}{2}q(2[\ell][\ell+1])\\
    &=q[\ell](q^{\ell}+q\inv[\ell])\\
    &=[\ell]^2+q^{\ell+1}[\ell].
\end{align*}
\end{proof}

\section{Generic Gelfand-Tsetlin Representations of $\uqq$}\label{uqq}
In this final section, our goal is to construct and study generic Gelfand-Tsetlin representations of $\uqq$. These are infinite-dimensional analogs to the representations whose formulas are given by \eqref{i21}--\eqref{i43}. We use Lemma \ref{zar4} to accomplish this, but this requires that we rationalize our formulas first. We achieve an invertible re-scaling of the basis so that the coefficients from the formulas are rational functions in $q^p$, $q^r$, $q^{\ell}$, and $q^m$. While the finite-dimensional representations were characterized by numbers $p$ and $r$, these generic Gelfand-Tsetlin representations are characterized by numbers $p$, $r$, $\ell_0$, and $m_0$. We impose conditions on these numbers to avoid singularities in the formulas. We then impose an additional condition on $\ell_0$ and $m_0$ so that we may reasonably study the length of these new representations. We provide an example of an irreducible representation and one of maximum length. 

\subsection{Re-Scaling Basis}
In order to define and prove the existence of our generic Gelfand-Tsetlin representations of $\uqq$, we rationalize the formulas from \eqref{i21}--\eqref{clm} so that we may then apply Lemma \ref{zar4}. Like in Lemma \ref{lemscale3}, we achieve this through an invertible re-scaling of the basis.
\begin{lemma}\label{lemscale4}
There exists a re-scaling of the basis vectors of the finite-dimensional $\uqq$-representation $V_{p,r}$ such that the coefficients in the relations are rational functions of $q^{p},q^{r},q^{\ell},q^{m}$.
\end{lemma}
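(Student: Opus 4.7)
My plan is to adapt the recursive rescaling of Lemma \ref{lemscale3} to the two-parameter Gelfand-Tsetlin basis $\{\ket{\ell,m}\}$ of $V_{p,r}$. Write $\ket{\ell,m}':=\mu_{\ell,m}\ket{\ell,m}$ with $\mu_{\ell,m}\in \C^\times$ to be chosen. Since $I_{21}.\ket{\ell,m}'=i[m]\ket{\ell,m}'$ already has a rational coefficient, and the $iC_{\ell,m}$ summand in \eqref{i43} is already rational, the whole task reduces to rationalizing the $A_{\ell,m}$, $B_{\ell,m}$, and $B_{\ell-1,m}$ coefficients.

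Within each fixed $\ell$, the $I_{32}$ action on $V_{p,r}$ has the same shape as the $\uq$ action on $V_\ell$, so I would apply the recipe of Lemma \ref{lemscale3} one $\ell$-level at a time: fix an ``anchor'' $\mu_{\ell,\ell}$ and declare $\mu_{\ell,m}:=\mu_{\ell,m+1}\cdot A_{\ell,m}$ for all admissible $m<\ell$. This makes $I_{32}.\ket{\ell,m}'=A_{\ell,m}^2\ket{\ell,m+1}'-\ket{\ell,m-1}'$, both coefficients rational, and the nonvanishing argument of Lemma \ref{lemscale3} shows the scaling is well-defined.

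The remaining freedom is the choice of each anchor ratio $\mu_{\ell,\ell}/\mu_{\ell+1,\ell+1}$, which I would set recursively downward in $\ell$ starting from $\mu_{p,p}:=1$, so as to rationalize the $I_{43}$ coefficients. Expanding $\mu_{\ell,m}/\mu_{\ell+1,m}$ via the $I_{32}$-scaling recursion produces the anchor ratio times $\frac{1}{A_{\ell+1,\ell}}\prod_{k=m}^{\ell-1}\frac{A_{\ell,k}}{A_{\ell+1,k}}$. On the level of squares, the product telescopes to $\frac{[\ell+m+1]}{[2\ell+1][\ell-m+1]}$, and after further multiplication by $B_{\ell,m}^2$ the $[\ell-m+1]$ cancels, isolating the $m$-dependence as $[\ell+m+1]^2$. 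Choosing $\mu_{\ell,\ell}/\mu_{\ell+1,\ell+1}$ so that its square equals the reciprocal of the leftover $m$-independent rational factor then forces $(\mu_{\ell,m}/\mu_{\ell+1,m})B_{\ell,m}$ to be $\pm[\ell+m+1]$, which is rational; the lowering coefficient $(\mu_{\ell,m}/\mu_{\ell-1,m})B_{\ell-1,m}$ is rationalized by the symmetric computation, yielding a rational multiple of $B_{\ell-1,m}^2/[\ell+m]$.

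The main obstacle is that a single $m$-independent anchor ratio must simultaneously rationalize every $m$-slice of the $I_{43}$ coefficient. This works only because of the telescoping identity above and the precise cancellation of $[\ell-m+1]$ against the matching factor in $B_{\ell,m}^2$; these facts are not visible from the raw formulas \eqref{alm}--\eqref{clm} and require direct computation with $q$-numbers.
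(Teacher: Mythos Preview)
Your proposal is essentially the paper's proof: both anchor at $\mu_{p,p}=1$, run the $m$-recursion $\mu_{\ell,m}=A_{\ell,m}\,\mu_{\ell,m+1}$ of Lemma~\ref{lemscale3} within each $\ell$-level, and then fix the $\ell$-anchors recursively downward, with the same telescoping identity doing the work on the $I_{43}$ coefficients. The only difference is that the paper makes the explicit choice $\mu_{\ell,\ell}=B_{\ell,\ell}\,\mu_{\ell+1,\ell}$ (mirroring the $m$-recursion with $B$ in place of $A$), which after the telescoping gives $\mu_{\ell,m}/\mu_{\ell+1,m}=B_{\ell,m}/[\ell-m+1]$ and hence the raising/lowering coefficients $B_{\ell,m}^2/[\ell-m+1]$ and $[\ell-m]$ used downstream in \eqref{myeq1c}--\eqref{myeq2b}, whereas your implicit normalization puts the simple factor $[\ell+m+1]$ on the raising side and the complicated rational expression on the lowering side.
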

\begin{proof}
Let $p\in \frac{1}{2}\Z_{\geq0}$, $r\in \frac{1}{2}\Z$, $p-r\in \Z$, and $p\geq |r|$. Let $V_{p,r}$ be the corresponding finite-dimensional irreducible representation of $\uqq$. Given the basis $\{\ket{\ell,m}\mid p\geq \ell \geq |r|, \ \ell \geq m \geq -\ell, \ p-\ell\in\Z, \ \ell-m\in\Z\}$ from \cite[Proposition~2]{ga}, we wish to form a new basis $\{\ket{\ell,m}'\}$ where $\ket{\ell,m}'=\mu_{\ell,m}\cdot \ket{\ell,m}$ so that the coefficients in the relations for the representation are rational functions of $q^{p},q^{r},q^{\ell},q^{m}$. 

We choose $\mu_{p,p}=1$, $\mu_{\ell,m}=A_{\ell,m}\cdot \mu_{\ell,m+1}$ when $m<\ell$, and $\mu_{\ell,\ell}=B_{\ell,\ell}\cdot\mu_{\ell+1,\ell}$ when $\ell<p$. $A_{\ell,m}$ and $B_{\ell,m}$ are from \eqref{alm} and \eqref{blm}, respectively. Then when $p>\ell>m$,
\begin{align*}
    \mu_{\ell,m}&=A_{\ell,m} A_{\ell,m+1}\dots A_{\ell,\ell-1}\cdot \mu_{\ell,\ell}\\
    &= A_{\ell,m} A_{\ell,m+1}\dots A_{\ell,\ell-1}B_{\ell,\ell}\cdot\mu_{\ell+1,\ell}\\
    &= A_{\ell,m} A_{\ell,m+1}\dots A_{\ell,\ell-1}B_{\ell,\ell}A_{\ell+1,\ell}\cdot\mu_{\ell+1,\ell+1}\\
    &= \bigg(\prod_{k=m}^{\ell-1}A_{\ell,k}\bigg)\bigg(\prod_{j=\ell}^{p-1}B_{j,j}A_{j+1,j}\bigg).
\end{align*}In the $\uq$ case, we saw $A_{\ell,k}\neq 0$ when $k<\ell$. Now we have to check $B_{j,j}\neq 0$ when $j<p$. By Lemma \ref{lemq}\eqref{lemq1}, it suffices to check that $a\neq 0$ for each $q$-number $[a]$ in the numerator. We know $p+j+2\neq 0$ since $j$ must be positive ($j\geq |r|$). Since $j<p$, $p-j\neq 0$. Suppose $j+r+1=0$. Then $r=-(j+1)$, so $j\geq|r|=j+1$, which is a contradiction. Likewise, if $j-r+1=0$ then we also get $j\geq j+1$. Obviously $j+j+1\neq 0$ since $j$ is nonnegative and $j-j+1\neq 0$. Therefore $\mu_{\ell,m}\neq 0$ when $p>\ell>m$. Note that when $\ell=m$, we just omit the first parenthetical product for $\mu_{\ell,m}$, so now we know $\mu_{\ell,m}\neq 0$ for $p>\ell\geq m$. If $p=\ell$, then $\mu_{\ell,m}$ is just $\mu_m$ from our re-scaling in the $\uq$ case, which we know to be nonzero. Thus we have an invertible re-scaling of the basis for $V_{p,r}$.

Now
\begin{subequations}
\begin{equation*}
    I_{21}.\ket{\ell,m}'=\mu_{\ell,m}\cdot i[m]\ket{\ell,m}=i[m]\ket{\ell,m}',
\end{equation*}
\begin{equation*}
     I_{32}.\ket{\ell,m}'=\mu_{\ell,m}(A_{\ell,m}\ket{\ell,m+1}-A_{\ell,m-1}\ket{\ell,m-1})
\end{equation*}
\begin{equation*}
    = \dfrac{\mu_{\ell,m}}{\mu_{\ell,m+1}}\cdot A_{\ell,m}\ket{\ell,m+1}'-\dfrac{\mu_{\ell,m}}{\mu_{\ell,m-1}}\cdot A_{\ell,m-1}\ket{\ell,m-1}',
\end{equation*}and likewise
\begin{equation*}
    I_{43}.\ket{\ell,m}'=\dfrac{\mu_{\ell,m}}{\mu_{\ell+1,m}}\cdot B_{\ell,m} \ket{\ell+1,m}'-\dfrac{\mu_{\ell,m}}{\mu_{\ell-1,m}}\cdot B_{\ell-1,m} \ket{\ell-1,m}'+iC_{\ell,m}\ket{\ell,m}'
\end{equation*}
\end{subequations}where $C_{\ell,m}$ comes from \eqref{clm}.
From our work in the $\uq$ case, we know 
\begin{equation*}
     I_{32}.\ket{\ell,m}'=A_{\ell,m}^2\ket{\ell,m+1}'-\ket{\ell,m-1}'.
\end{equation*}So it remains to check that the coefficients in $I_{43}.\ket{\ell,m}'$ are rational functions in $q^{p},q^{r},q^{\ell},q^{m}$. $C_{\ell,m}$ already has this form, so we check the other terms.
\begin{align*}
    \dfrac{\mu_{\ell,m}}{\mu_{\ell+1,m}}&=\dfrac{\bigg(\prod_{k=m}^{\ell-1}A_{\ell,k}\bigg)\bigg(\prod_{j=\ell}^{p-1}B_{j,j}A_{j+1,j}\bigg)}{\bigg(\prod_{k=m}^{\ell}A_{\ell+1,k}\bigg)\bigg(\prod_{j=\ell+1}^{p-1}B_{j,j}A_{j+1,j}\bigg)}\\
    &= \dfrac{\prod_{k=m}^{\ell-1}A_{\ell,k}}{\prod_{k=m}^{\ell}A_{\ell+1,k}}\cdot B_{\ell,\ell}A_{\ell+1,\ell}\\
    &=\bigg(\prod_{k=m}^{\ell-1}\dfrac{A_{\ell,k}}{A_{\ell+1,k}}\bigg)B_{\ell,\ell}\\
    &=\bigg(\prod_{k=m}^{\ell-1}\dfrac{[\ell+k+1][\ell-k]}{[\ell+k+2][\ell-k+1]}\bigg)^{1/2}B_{\ell,\ell}\\
    &=\bigg(\dfrac{[\ell+m+1][1]}{[2\ell+1][\ell-m+1]}\bigg)^{1/2}B_{\ell,\ell}\\
    &=\bigg(\dfrac{[\ell+m+1][1]}{[2\ell+1][\ell-m+1]}\bigg)^{1/2}B_{\ell,m}\bigg(\dfrac{[2\ell+1][1]}{[\ell+m+1][\ell-m+1]}\bigg)^{1/2}\\
    &=\dfrac{B_{\ell,m}}{[\ell-m+1]}.
\end{align*}So now we know $\mu_{\ell,m}=\frac{B_{\ell,m}}{[\ell-m+1]}\cdot \mu_{\ell+1,m}$, which means
\begin{equation*}
    \mu_{\ell-1,m}=\dfrac{B_{\ell-1,m}}{[\ell-m]}\cdot \mu_{\ell,m},
\end{equation*}hence
\begin{equation*}
    \dfrac{\mu_{\ell,m}}{\mu_{\ell-1,m}}=\dfrac{[\ell-m]}{B_{\ell-1,m}}.
\end{equation*}
\end{proof}

\subsection{Existence}
With the newly rationalized formulas, we are prepared to define and prove the existence of our generic Gelfand-Tsetlin representations of $\uqq$. These now are characterized by the numbers $p$, $r$, $\ell_0$, and $m_0$. The variables appearing in the formulas no longer satisfy the conditions \eqref{highest_wt_even}, \eqref{interlacing_odd}, and \eqref{interlacing_even}, which is why we need $\ell_0$ and $m_0$. These numbers provide ``anchors" for our basis, since every basis vector is characterized by numbers $\ell$ and $m$, integer shifts of $\ell_0$ and $m_0$, respectively. We still need $p$ and $r$ to characterize our representations since they appear in the formulas.

Let $p,r,\ell_0,m_0\in \C$ where $q^{2m}\neq -1$ for all $m\in m_0+\Z$, $q^{2\ell}\neq 1$ and $q^{4\ell+2}\neq 1$ for all $\ell\in \ell_0+\Z$. Then we define 
\begin{equation*}
    V_{\ell_0,m_0}^{p,r}=\bigoplus_{\substack{\ell\in \ell_0+\Z\\m\in m_0+\Z}}\C\cdot \ket{\ell,m}
\end{equation*}
and define action
\begin{subequations}
\begin{equation}
\label{myeq1a}
    I_{21}.\ket{\ell,m}=i[m]\ket{\ell,m},
\end{equation}
\begin{equation}
\label{myeq1b}
    I_{32}.\ket{\ell,m}=a_{\ell,m}\ket{\ell,m+1}-\ket{\ell,m-1},
\end{equation}
\begin{equation}
\label{myeq1c}
    I_{43}.\ket{\ell,m}=b_{\ell,m}\ket{\ell+1,m}-[\ell-m]\ket{\ell-1,m}+ic_{\ell,m}\ket{\ell,m}
\end{equation}
\end{subequations}where
\begin{subequations}
\begin{equation}
\label{myeq2a}
    a_{\ell,m}=\dfrac{[m][m+1]}{[2m][2m+2]}\cdot [\ell+m+1][\ell-m],
\end{equation}
\begin{equation}
\label{myeq2b}
    b_{\ell,m}=\dfrac{[p+\ell+2][p-\ell][\ell+r+1][\ell-r+1][\ell+m+1]}{[\ell+1]^2[2\ell+1][2\ell+3]},
\end{equation}
\begin{equation}
\label{myeq2c}
    c_{\ell,m}=\dfrac{[p+1][r][m]}{[\ell+1][\ell]}.
\end{equation}
\end{subequations}
\begin{theorem} \label{exist4}
$V_{\ell_0,m_0}^{p,r}$ with the above action is a representation of $\uqq$.
\end{theorem}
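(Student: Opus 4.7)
The plan is to mimic the proof of Theorem \ref{exist3}, now working with four variables $q^p, q^r, q^\ell, q^m$ and invoking Lemma \ref{zar4} in place of Lemma \ref{zar3}. Let $u=0$ be any one of the defining relations \eqref{rel1}--\eqref{rel3} for $\uqq$, with $I_{21}, I_{32}, I_{43}$ playing the role of $I_{i,i-1}$. The formulas \eqref{myeq1a}--\eqref{myeq1c} realize each generator as a finite sum of shift operators in $\ell$ and $m$ weighted by rational functions of $q^p, q^r, q^\ell, q^m$; hence for any fixed basis vector $\ket{\ell,m}$ we can expand
\begin{equation*}
    u.\ket{\ell,m}=\sum_{|k_1|\leq 2,\,|k_2|\leq 2}f_{k_1,k_2}(q^p,q^r,q^\ell,q^m)\,\ket{\ell+k_1,m+k_2},
\end{equation*}
where each $f_{k_1,k_2}$ is a rational function of the four indeterminates. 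The task reduces to showing every $f_{k_1,k_2}$ is identically zero.

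Next I would appeal to Lemma \ref{lemscale4} together with \cite[Proposition~2]{ga}: after the rescaling, the very same formulas \eqref{myeq1a}--\eqref{myeq1c} define the honest finite-dimensional irreducible $\uqq$-representations $V_{p',r'}$ on basis vectors $\ket{\ell',m'}$ indexed by valid Gelfand-Tsetlin patterns. The relation $u=0$ therefore genuinely holds on every such $V_{p',r'}$, which means each $f_{k_1,k_2}$ vanishes at the four-tuple $(q^{p'},q^{r'},q^{\ell'},q^{m'})$ coming from any admissible pattern. Applying Lemma \ref{zar4} with $n=4$ (so $s=k^2=4$, and the pattern entries $m_{41},m_{42},m_{31},m_{21}$ correspond precisely to $p,r,\ell,m$) then forces each $f_{k_1,k_2}$ to be the zero rational function, completing the argument.

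The step I expect to require the most care is the bookkeeping needed to ensure that each $f_{k_1,k_2}$ is genuinely a well-defined rational function on $V_{\ell_0,m_0}^{p,r}$: the hypotheses $q^{2m}\neq -1$, $q^{2\ell}\neq 1$, and $q^{4\ell+2}\neq 1$ for all $\ell\in\ell_0+\Z$, $m\in m_0+\Z$ are exactly what guarantees that the denominators appearing in \eqref{myeq2a}--\eqref{myeq2c} (namely $[2m]$, $[2m+2]$, $[\ell]$, $[\ell+1]$, $[2\ell+1]$, $[2\ell+3]$) do not vanish on the basis, while the convention $\frac{[0]}{[0]}:=\frac{1}{2}$ handles the isolated indeterminate forms. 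Since two rational functions that agree on a Zariski-dense set must coincide wherever both are regular, the formal identity $f_{k_1,k_2}=0$ propagates from the Gelfand-Tsetlin loci to every $(\ell,m)\in(\ell_0+\Z)\times(m_0+\Z)$, so the action is well defined and satisfies all the relations of $\uqq$.
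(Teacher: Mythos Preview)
Your proposal is correct and follows essentially the same argument as the paper's own proof: expand $u.\ket{\ell,m}$ as a finite sum with rational-function coefficients in $q^p,q^r,q^\ell,q^m$, use Lemma~\ref{lemscale4} and \cite[Proposition~2]{ga} to see these coefficients vanish on all admissible Gelfand--Tsetlin patterns, and conclude by Lemma~\ref{zar4}. Your added remarks on the well-definedness of the denominators and the explicit identification $s=4$ are helpful elaborations but do not change the strategy.
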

\begin{proof}
Again, we follow the argument of \cite[Theorem~2]{maz}. Let $u=0$ be a relation in $\uqq$ (see \eqref{rel1}--\eqref{rel3}). We want to show this relation holds in $V_{\ell_0,m_0}^{p,r}$, i.e. $u.\ket{\ell,m}=0$ for any $\ell\in\ell_0+Z$ and $m\in m_0+\Z$. Using the formulas we can write 
\begin{equation*}
    u.\ket{\ell,m}=\sum_{k=-2}^2\sum_{k'=-2}^2f_{\ket{\ell+k,m+k'}}(q^p,q^r,q^{\ell},q^m)\ket{\ell+k,m+k'}
\end{equation*}where $f_{\ket{\ell+k,m+k'}}$ is a rational function in $q^{p},q^{r},q^{\ell},q^{m}$. We want to show $f_{\ket{\ell+k,m+k'}}$ is identically zero. 

By Lemma \ref{lemscale4}, for the finite-dimensional representation $V_{p',r'}$ with highest weight $p'\geq |r'|$ (where $(p',|r'|)\in \Z_{\geq 0}^2 \cup (\frac{1}{2}+\Z_{\geq 0})^2$) there exists a basis $\{\ket{\ell',m'}\mid p'\geq\ell'\geq|r'|; \ell'\geq|m'|;p'- r',p'- \ell', p'- m' \in \Z)\}$ such that the action of the generators of $\uqq$ on a basis vector is given by \eqref{myeq1a}--\eqref{myeq1c}, except we replace $p$ with $p'$, $r$ with $r'$, $\ell$ with $\ell'$, and $m$ with $m'$. By \cite[Proposition~2]{ga} and Lemma \ref{lemscale4}, $f_{\ket{\ell+k,m+k'}}$ is zero when evaluated at any $q^{p'},q^{r'},q^{\ell'},q^{m'}$ that define a Gelfand-Tsetlin pattern for $\uqq$. Therefore by Lemma \ref{zar4}, each $f_{\ket{\ell+k,m+k'}}$ is identically 0.
\end{proof}

\subsection{Irreducibility and Finite Length}
Adding natural condition on $\ell_0$ and $m_0$ for convenience, we provide an upper bound for the length of generic Gelfand-Tsetlin representations of $\uqq$ and sufficient conditions for irreducibility in Theorem \ref{finlength4}. In the following section, we use this theorem to provide particular values of $p$, $r$, $\ell_0$, and $m_0$ that demonstrate examples of irreducibility and maximum length.
\begin{theorem}\label{finlength4}
Let $p,r,\ell_0,m_0\in \C$ where $q^{2m_0+k}\neq -1$, $q^{2\ell_0+2k}\neq 1$, and $q^{4\ell_0+2k}\neq 1$ for all $k\in\Z$. Then $V_{\ell_0,m_0}^{p,r}$ has a length of at most $6$. Moreover, if for all $k\in\Z$ we have $q^{2p+2\ell_0+2k}\neq 1$, $q^{2p-2\ell_0+2k}\neq 1$, $q^{2\ell_0+2r+2k}\neq 1$, $q^{2\ell_0-2r+2k}\neq 1$, $q^{2\ell_0+2m_0+2k}\neq 1$, and $q^{2\ell_0-2m_0+2k}\neq 1$, then $V_{\ell_0,m_0}^{p,r}$ is irreducible.
\end{theorem}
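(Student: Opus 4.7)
The plan is to adapt the strategy of Theorems~\ref{irred3} and~\ref{finlength3} to the two-dimensional basis of $V_{\ell_0,m_0}^{p,r}$, using the Casimir $C_q$ from Proposition~\ref{cq} as a second commuting scalar operator alongside $I_{21}$.

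First I would show that every nonzero $\uqq$-subrepresentation $U$ decomposes as $U=\bigoplus_{(\ell,m)\in T(U)}\C\ket{\ell,m}$ for some $T(U)\subseteq(\ell_0+\Z)\times(m_0+\Z)$. Since $\uq\subseteq\uqq$ and $C_q\in Z(\uq)$ by Proposition~\ref{cq}\eqref{cqcentral}, $U$ is stable under $C_q$. By \eqref{myeq1a} and Proposition~\ref{cq}\eqref{cqeigval}, $I_{21}$ acts on $\ket{\ell,m}$ by $i[m]$ and $C_q$ acts by $[\ell]^2+q^{\ell+1}[\ell]$. The hypotheses $q^{2m_0+k}\neq-1$ and $q^{4\ell_0+2k}\neq 1$ together with parts~\eqref{lemq4} and~\eqref{lemq5} of Lemma~\ref{lemq} make these eigenvalues distinct for distinct $m\in m_0+\Z$ and distinct $\ell\in\ell_0+\Z$ respectively. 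An iterated two-dimensional Vandermonde argument—projecting first onto each $I_{21}$-weight space using polynomials in $I_{21}$, then onto each $C_q$-eigenspace using polynomials in $C_q$—extracts each basis vector appearing with nonzero coefficient in any $v\in U$.

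The problem then reduces to a combinatorial reachability question. Reading off \eqref{myeq1b}--\eqref{myeq1c}, membership $\ket{\ell,m}\in U$ forces $\ket{\ell,m-1}\in U$ unconditionally, and forces $\ket{\ell,m+1}$, $\ket{\ell+1,m}$, $\ket{\ell-1,m}$ into $U$ whenever $a_{\ell,m}\neq 0$, $b_{\ell,m}\neq 0$, $[\ell-m]\neq 0$ respectively. For the irreducibility half of the theorem, the six extra inequalities translate directly, via $[b]=0\iff q^{2b}=1$, into the nonvanishing of $[p+\ell+2]$, $[p-\ell]$, $[\ell+r+1]$, $[\ell-r+1]$, $[\ell+m+1]$, and $[\ell-m]$ for all $(\ell,m)$. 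Together with Lemma~\ref{lemq}\eqref{lemq3} ensuring that $\frac{[m][m+1]}{[2m][2m+2]}$ is defined and nonzero, every numerator factor of $a_{\ell,m}$ and $b_{\ell,m}$ is nonzero, as is $[\ell-m]$. Every move is therefore available, the reachability graph is strongly connected, and $T(U)=(\ell_0+\Z)\times(m_0+\Z)$.

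For the length bound, I would observe that each failure of one of the six extra inequalities introduces at most one barrier: four possible horizontal barriers at a single $\ell$-value (from $[p+\ell+2]$, $[p-\ell]$, $[\ell+r+1]$, or $[\ell-r+1]$ vanishing), each blocking only the $\ell\to\ell+1$ move across that row, and two possible diagonal barriers, on lines $\ell+m=\mathrm{const}$ (blocking both $m\to m+1$ and $\ell\to\ell+1$) and $\ell-m=\mathrm{const}$ (blocking both $m\to m+1$ and $\ell\to\ell-1$). Each barrier cuts off a proper lower set in the reachability order, and combining this with the fact that every $\ell$-slice $\bigoplus_m\C\ket{\ell,m}$ restricts to $V_{m_0}^{\ell}$ as a $\uq$-module and so has $\uq$-length at most $3$ by Theorem~\ref{finlength3} should cap the composition length at $6$. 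The careful combinatorial bookkeeping here is the main obstacle: unlike the one-dimensional $\uq$ case where two cuts immediately give a chain of length three, the two-dimensional setting requires a detailed analysis of how horizontal and diagonal barriers interact, and one must verify that no chain of seven or more subrepresentations can arise from the allowed configurations.
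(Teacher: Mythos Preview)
Your first two paragraphs match the paper: the joint $(I_{21},C_q)$-diagonalization argument with Lemma~\ref{lemq}\eqref{lemq4},\eqref{lemq5} and the iterated Vandermonde extraction is exactly how the paper shows every subrepresentation is spanned by basis vectors, and your irreducibility argument is the paper's.

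The length-bound paragraph has a genuine gap. You count ``four possible horizontal barriers'' and ``two possible diagonal barriers,'' and then hope the combinatorics caps the length at $6$. But with four horizontal cuts in the $\ell$-direction you already get five strips, and a diagonal barrier on top of that pushes the count well past $6$; the appeal to Theorem~\ref{finlength3} on $\ell$-slices does not control the $\uqq$-length, since composition factors of $V_{\ell_0,m_0}^{p,r}$ are not confined to single slices. What you are missing is that the standing hypothesis $q^{4\ell_0+2k}\neq 1$ does double duty beyond separating the $C_q$-eigenvalues: if both $[p+\ell+2]=0$ and $[p-\ell']=0$ occurred for some $\ell,\ell'\in\ell_0+\Z$, multiplying gives $q^{4\ell_0+2k}=1$ for some $k$, a contradiction; the same trick rules out the simultaneous vanishing of $[\ell+r+1]$ and $[\ell'-r+1]$, and of $[\ell+m+1]$ and $[\ell'-m']$. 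Hence there are at most \emph{two} horizontal barriers $\ell_1>\ell_2$ and at most \emph{one} diagonal barrier. With that reduction the paper runs a short case analysis: if there is no diagonal barrier the chain $0\subset\uqq\ket{\ell_2,m}\subset\uqq\ket{\ell_1,m}\subset V$ has length $3$; if there is one diagonal barrier, the subrepresentation $U$ generated by the vectors on the diagonal, together with the half-planes $M_j=\bigoplus_{k\geq 0,m}\C\ket{\ell_j-k,m}$, yields the explicit length-$6$ chain $0\subset U\cap M_2\subset U\cap M_1\subset U\subset U+M_2\subset U+M_1\subset V$. Your outline becomes a proof once you insert this barrier-reduction step and replace the vague ``should cap'' with the concrete chains above.
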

\begin{proof}
Suppose $U$ is a subrepresentation, $S\subseteq \Z^2$, and $\sum_{(k_1,k_2)\in S}a_{k_1,k_2}\ket{\ell_0+k_1,m_0+k_2}\in U$, where $a_{k_1,k_2}\in \C$. Let ${S}_{k_2}:=\{k_1:(k_1,k_2)\in S\}$. Then as seen in the proof of Theorem \ref{irred3}, since $q^{2m_0+k}\neq -1$ for all $k\in\Z$, $\sum_{k_1\in{S}_{k_2}}a_{k_1,k_2}\ket{\ell_0+k_1,m_0+k_2}\in U$. By the same argument, since $q^{4\ell_0+2k}\neq 1$ for all $k\in\Z$ (see Lemma \ref{lemq}\eqref{lemq5} and Proposition \ref{cq}), it follows that $\ket{\ell_0+k_1,m_0+k_2}\in U$ for all $(k_1,k_2)\in S$. Therefore, by \eqref{myeq1a}-\eqref{myeq2c}, if the following $q$-numbers are never equal to 0, then $V_{\ell_0,m_0}^{p,r}$ is irreducible, since any nonzero subrepresentation would have to equal the entire space: $[p+\ell+2]$, $[p-\ell]$, $[\ell+r+1]$, $[\ell-r+1]$, $[\ell+m+1]$, and $[\ell-m]$, for all $\ell\in \ell_0+\Z$ and for all $m\in m_0+\Z$. Equivalently, if for all $k\in\Z$ we have $q^{2p+2\ell_0+2k}\neq 1$, $q^{2p-2\ell_0+2k}\neq 1$, $q^{2\ell_0+2r+2k}\neq 1$, $q^{2\ell_0-2r+2k}\neq 1$, $q^{2\ell_0+2m_0+2k}\neq 1$, and $q^{2\ell_0-2m_0+2k}\neq 1$, then $V_{\ell_0,m_0}^{p,r}$ is irreducible.

Let $R:=\{\ell\in \ell_0+\Z:[p+\ell+2]=0, \ [p-\ell]=0, \ [\ell+r+1]=0, \ \text{or} \ [\ell-r+1]=0\}$. Since $p$ and $r$ are fixed, Lemma \ref{lemq}\eqref{lemq2} ensures that $|R|\leq 4$. Suppose we have integers $k_1$ and $k_2$ such that $q^{2\ell_0+2r+2k_1}=1$ and $q^{2\ell_0-2r+2k_2}=1$. Then
\begin{equation*}
    q^{2\ell_0+2r+2k_1}\cdot q^{2\ell_0-2r+2k_2}=1
\end{equation*}which implies
\begin{equation*}
    q^{4\ell_0+2(k_1+k_2)}=1.
\end{equation*}This contradicts one of our initial conditions. Likewise, we cannot have both $[p+\ell+2]=0$ and $[p-\ell]=[\ell-p]=0$. Therefore $|R|\leq 2$. We name the potential elements of $R$ $\ell_1$ and $\ell_2$, where $\ell_1-\ell_2\geq 0$. By the same argument as shown above, it is impossible to have integers $k_1$ and $k_2$ such that $q^{2\ell_0+2m_0+2k_1}=1$ and $q^{2\ell_0-2m_0+2k_2}=1$. We therefore examine three distinct cases: one where $q^{2\ell_0+2m_0+2k}\neq 1$ and $q^{2\ell_0-2m_0+2k}\neq 1$ for all $k\in\Z$, one where there exists $k'\in\Z$ such that $q^{2\ell_0-2m_0+2k'}=1$ , and one where there exists $k'\in \Z$ such that $q^{2\ell_0+2m_0+2k'}=1$. In the following arguments, we will assume for convenience that we have distinct $\ell_1$ and $\ell_2$. The arguments in other cases are similar.
\begin{case}
$q^{2\ell_0+2m_0+2k}\neq 1$ and $q^{2\ell_0-2m_0+2k}\neq 1$ for all $k\in\Z$.
\end{case}\noindent This is analogous to what we saw in Theorem \ref{finlength3}. By the same arguments shown there, we obtain the following composition series:
\begin{equation*}
    0 \subset \uqq\ket{\ell_2,m_2} \subset \uqq\ket{\ell_1,m_1} \subset V_{\ell_0,m_0}^{p,r}
\end{equation*}for any $m_1,m_2\in m_0+\Z$.
\begin{case}
There exists $k'\in\Z$ such that $q^{2\ell_0-2m_0+2k'}=1$.
\end{case}\noindent Let $S:=\{\ket{\ell,m}:[\ell-m]=0\}$, and let $U$ be the subrepresentation generated by $S$. By Lemma \ref{lemq}\eqref{lemq2}, if we have a pair $(\ell,m)$ such that $[\ell-m]=0$, then $S=\{\ket{\ell+k,m+k}:k\in\Z\}$. $U$ has the following basis: $\{\ket{\ell',m'-k}:\ket{\ell',m'}\in S, \ k\in\Z_{\geq 0}\}$. This is because we are not allowed to raise the $m$-component from $\ket{\ell',m'}$ by \eqref{myeq1b} and \eqref{myeq2a}, and if we start from a generating vector with higher $\ell$, say $\ket{\ell'+k',m'+k'}$ for some positive integer $k'$, we cannot lower the $\ell$-component without first lowering the $m$-component by \eqref{myeq1b}, \eqref{myeq2a}, and \eqref{myeq1c}. Let $M_j:=\bigoplus_{k\in \Z_{\geq0}, \ m\in m_0+\Z}\C\ket{\ell_j-k,m}$, and let $U_j:=U+M_j$ for $j\in\{1,2\}$. The $U_j$ are subrepresentations since they are the sum of subrepresentations. $V_{\ell_0,m_0}^{p,r}/U_1$ has basis $\{\ket{\ell',m'+k}:\ell'-\ell_1>0, \ \ket{\ell',m'}\in S, \ k\in\Z_{>0}\}$. Since $\ell'-\ell_1>0$ and we have modded out by $U$, it is straightforward that $V_{\ell_0,m_0}^{p,r}/U_1$ is generated by any nonzero vector, hence it is irreducible.  Similarly, $U_1/U_2$ and $U_2/U$ are irreducible. Let $U_j'=U\cap M_j$ for $j\in\{1,2\}$. $U/U_1'$, $U_1'/U_2'$, and $U_2'$ are generated by any nonzero vector. We obtain the following composition series:
\begin{equation*}
    0\subset U_2'\subset U_1'\subset U \subset U_2 \subset U_1 \subset V_{\ell_0,m_0}^{p,r}.
\end{equation*}
\begin{case}
There exists $k'\in \Z$ such that $q^{2\ell_0+2m_0+2k'}=1$.
\end{case}\noindent Let $T:=\{\ket{\ell,m}:[\ell+m+1]=0\}$, and let $W$ be the subrepresentation generated by $T$. By Lemma \ref{lemq}\eqref{lemq2}, if we have a pair $(\ell,m)$ such that $[\ell+m+1]=0$, then $T=\{\ket{\ell+k,m-k}:k\in\Z\}$. $W$ has the following basis: $\{\ket{\ell',m'-k}:\ket{\ell',m'}\in T, \ k\in\Z_{\geq 0}\}$. This is because we are not allowed to raise the $m$-component from $\ket{\ell',m'}$ by \eqref{myeq1b} and \eqref{myeq2a}, and if we start from a generating vector with lower $\ell$, say $\ket{\ell'-k',m'+k'}$ for some positive integer $k'$, we cannot raise the $\ell$-component without first lowering the $m$-component by \eqref{myeq1b}, \eqref{myeq2a}, \eqref{myeq1c}, \eqref{myeq2b}. Let $W_j:=W+M_j$ for $j\in\{1,2\}$. For $j\in\{1,2\}$, let $W_j'=W\cap M_j$. As in the previous case, we have the following composition series:
\begin{equation*}
    0\subset W_2'\subset W_1'\subset W \subset W_2 \subset W_1 \subset V_{\ell_0,m_0}^{p,r}.
\end{equation*}
\end{proof}

\subsection{Examples}
In Example \ref{exirred4} we provide an irreducible generic Gelfand-Tsetlin representation of $\uqq$. Then in Example \ref{exmaxlength4} we provide a generic Gelfand-Tsetlin representation of maximum length.
\begin{example}\label{exirred4}
When $p=r=m_0=0$ and $\ell_0=\frac{1}{4}$, $V_{\ell_0,m_0}^{p,r}$ is irreducible.
\end{example}\noindent Using Lemma \ref{lemq}\eqref{lemq1}, a simple argument by contradiction shows that the initial hypotheses of Theorem \ref{finlength4} are satisfied, hence $V_{\ell_0,m_0}^{p,r}$ has a length of at most 6. Then since $p=r=m_0=0$, the other hypotheses in Theorem \ref{finlength4} are satisfied and $V_{\ell_0,m_0}^{p,r}$ is irreducible. 
\begin{example}\label{exmaxlength4}
When $p=r=\ell_0=m_0=\frac{1}{4}$, $V_{\ell_0,m_0}^{p,r}$ has length 6.
\end{example}\noindent By Example \ref{exirred4}, we already know $q^{2\ell_0+2k}\neq 1$ and $q^{4\ell_0+2k}\neq 1$ for all $k\in\Z$. Suppose there exists $k'\in\Z$ such that $q^{2m_0+k'}=q^{\frac{1}{2}+k'}=-1$. Then $q^{1+2k'}=1$, hence $[k'+\frac{1}{2}]=0$. By Lemma \ref{lemq}\eqref{lemq1}, this is impossible. Therefore the initial hypotheses of Theorem \ref{finlength4} are satisfied.

Recall the definitions of $\ell_1$ and $\ell_2$ from the proof of Theorem \ref{finlength4}. Since $[p-\ell_0]=[0]=0$ and $[(\ell_0-1)-r+1]=[0]=0$, we have $\ell_1=\ell_0=\frac{1}{4}$ and $\ell_2=\ell_0-1=-\frac{3}{4}$. We also have $[\ell_0-m_0]=[0]=0$, therefore this example follows Case 2 in the proof of Theorem \ref{finlength4}.

We illustrate the subrepresentations that appear in the composition series in Figure 1. Each point in the grid represents a basis vector, where the origin represents $\ket{\ell_0,m_0}$. $M_2$ consists of everything in the red region, $M_1$ consists of everything in the blue region, and $U$ consists of everything in the orange region.
\begin{figure}\label{fig:ex}
\centering
\begin{tikzpicture}
\coordinate (Origin)   at (0,0);
    \coordinate (XAxisMin) at (-4,0);
    \coordinate (XAxisMax) at (4,0);
    \coordinate (YAxisMin) at (0,-4);
    \coordinate (YAxisMax) at (0,4);
    \fill[orange, domain=-4:4, variable=\x]
    (-4,-4)
    -- plot ({\x},{\x})
    -- (4,-4)
    --cycle;
    \draw [thin, black,-latex] (XAxisMin) -- (XAxisMax) node [right] {$\ell$};
    \draw [thin, black,-latex] (YAxisMin) -- (YAxisMax) node [above] {$m$};
    \foreach \x in {-3,...,3}{
      \foreach \y in {-3,...,3}{
        \node[draw,circle,inner sep=1pt,fill] at (\x,\y) {};
      }
    }
    \draw[red, ultra thick] (-1,-4) -- (-1,4);
    \draw[blue, ultra thick] (0,-4) -- (0,4);
    \draw[orange, ultra thick] (-4,-4) -- (4,4);
    \draw[pattern=vertical lines, pattern color=blue] (-4,4) rectangle (0,-4);
    \draw[pattern=horizontal lines, pattern color=red] (-4,4) rectangle (-1,-4);
    \clip (-2,-2) rectangle (2,2);
\end{tikzpicture}
\caption{Subrepresentations in the composition series from Example \ref{exmaxlength4}}
\end{figure}
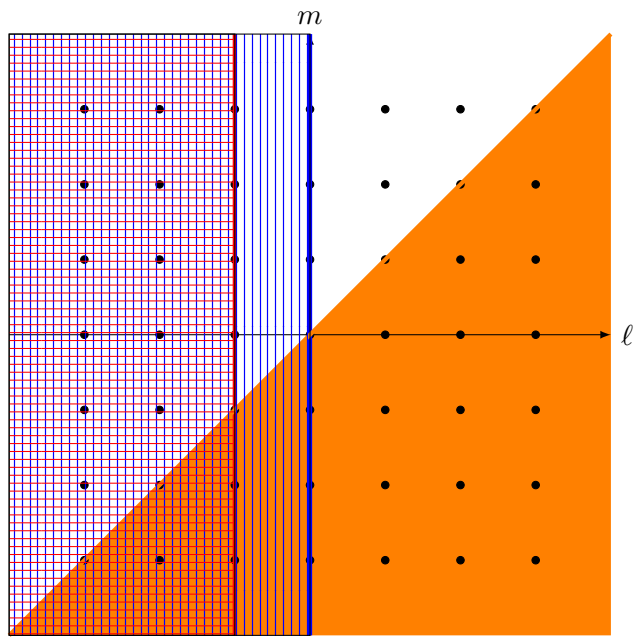
\newpage

\end{document}